\documentclass[pdflatex,sn-mathphys-num]{sn-jnl}% Math and Physical Sciences Numbered Reference Style

\usepackage{graphicx}%
\usepackage{multirow}%
\usepackage{amsmath,amssymb,amsfonts}%
\usepackage{amsthm}%
\usepackage{mathrsfs}%
\usepackage[title]{appendix}%
\usepackage{xcolor}%
\usepackage{textcomp}%
\usepackage{manyfoot}%
\usepackage{booktabs}%
\usepackage{algorithm}%
\usepackage{algorithmicx}%
\usepackage{algpseudocode}%
\usepackage{listings}%
\theoremstyle{thmstyleone}%
\newtheorem{theorem}{Theorem}[section]% meant for sectionwise numbers
\newtheorem{proposition}[theorem]{Proposition}%
\newtheorem{lemma}[theorem]{Lemma}%

\theoremstyle{thmstyletwo}%
\newtheorem{remark}[theorem]{Remark}%

\theoremstyle{thmstylethree}%
\newtheorem{definition}[theorem]{Definition}%

\numberwithin{equation}{section}

\newcommand{\R}{\mathbb{R}}
\newcommand{\Z}{\mathbb{Z}}
\newcommand{\T}{\mathbb{T}}
\newcommand{\Torus}{\mathbb{T}^2}

\begin{document}

\title[An Anisotropic Onsager Criterion]
{An Anisotropic Onsager Criterion for the Two-Dimensional
Navier--Stokes Equations with Horizontal Viscosity}

%%=============================================================%%
%% GivenName	-> \fnm{Siyu}
%% FamilyName	-> \sur{Liang}
%%=============================================================%%

\author*[1]{\fnm{Siyu} \sur{Liang}}\email{liangsiyu1994@163.com}

\affil*[1]{\orgdiv{School of Mathematics and Statistics},
\orgname{Nanjing University of Science and Technology},
\orgaddress{\city{Nanjing}, \postcode{210094}, \country{China}}}

%%==================================%%
%% Sample for unstructured abstract %%
%%==================================%%

\abstract{\unboldmath We consider distributional weak solutions $u\in L^\infty_tL^2_x$ of the two-dimensional Navier--Stokes equations with horizontal viscosity on $\mathbb T^2$ or $\mathbb R^2$, without assuming $\partial_1u\in L^2_tL^2_x$. We show that the Lions integrability condition $u\in L^4_tL^4_x$ together with Onsager-critical regularity in the nondissipative direction, $u\in L^3_tB^{1/3,v}_{3,\infty}$ with the Besov regularity imposed only in the vertical direction, implies $\partial_1u\in L^2_tL^2_x$, with a quantitative bound on the horizontal dissipation. Thus the dissipative regularity is a consequence of the equation and need not be part of the definition of the solution. If moreover $u\in L^3_tB^{1/3,v}_{3,c_0}$, then $u$ belongs to the isotropic critical Onsager space $L^3_tB^{1/3}_{3,c_0}$, satisfies the energy equality, and is continuous in $L^2$. Here $c_0$ indicates that the corresponding dyadic Besov sequence tends to zero at high frequencies. The proof combines absorption of the horizontal energy flux into the dissipation with a one-dimensional commutator estimate for the vertical energy flux.
}

\keywords{Onsager criterion, anisotropic Navier--Stokes equations, horizontal viscosity,
anisotropic critical regularity, Littlewood--Paley decomposition, energy equality}

\pacs[MSC Classification]{35Q30, 76D05, 76D03}

\maketitle

\section{Introduction}\label{sec:intro}

We consider the two-dimensional incompressible Navier--Stokes equations with viscosity
acting in the horizontal direction only,
\[
\partial_t u + \operatorname{div}(u\otimes u) + \nabla p - \nu\,\partial_1^2 u = 0,
\qquad \nabla\cdot u = 0,
\]
on the torus $\Torus$ or on the whole plane $\R^2$. Anisotropic dissipation arises in
geophysical fluid dynamics, where horizontal and vertical effective viscosities differ
greatly. See \cite{CDGG2000,CDGGbook,Paicu2005} for the anisotropic Navier--Stokes
equations in anisotropic Sobolev and Besov spaces. In particular, in our joint work with Zhang and Zhu~\cite{LZZ}, we established global
well-posedness for the two-dimensional equation above on both domains in the anisotropic
space $\widetilde H^{0,1}$, which requires one full vertical derivative in $L^2$. Since the
dissipation acts in one direction only, the equation is intermediate between the Navier--Stokes
equations and the Euler equations. We now ask how little regularity is needed for the
energy equality
\[
\tfrac12\|u(t)\|_{L^2}^2
+\nu\int_0^t\|\partial_1u(s)\|_{L^2}^2\,ds
=\tfrac12\|u_0\|_{L^2}^2
\]
to hold.

For the two limiting models the situation is as follows. On the Navier--Stokes side, the classical sufficient condition of Lions \cite{Lions1960} is $u\in L^4(0,T;L^4)$, and Shinbrot \cite{Shinbrot1974} extended it to $u\in L^p(0,T;L^q)$ with $2/p+2/q\le1$ and $q\ge4$. Further sufficient conditions can be found in \cite{BerselliChiodaroli2020,CheskidovLuo2020,Galdi2019}. On the Euler side,
the question is the content of Onsager's conjecture \cite{Onsager1949}, which singles out
$1/3$ as the critical regularity exponent. Energy conservation above this exponent was
proved by Constantin, E and Titi \cite{CET1994} for $u\in L^3(0,T;B^{\alpha}_{3,\infty})$
with $\alpha>1/3$ (see also \cite{Eyink1994,DuchonRobert2000}), and was extended to the
critical space $L^3(0,T;B^{1/3}_{3,c_0})$ by Cheskidov, Constantin, Friedlander and
Shvydkoy \cite{CCFS2008}. Below the critical exponent, weak solutions that fail to
conserve energy were constructed in three dimensions in \cite{Isett2018,BDSV2019} and in
two dimensions in \cite{GiriRadu2024}.

In two dimensions with full viscosity, every Leray--Hopf solution
\cite{Leray1934,Hopf1951} satisfies the energy equality.
On the torus the conclusion extends to distributional solutions in
$C([0,T];L^2)$, which are unique by the argument of \cite{FLRT2000} (see also
\cite{CheskidovLuo2023}) and hence coincide with the Leray solution. The endpoint is sharp, as Cheskidov and Luo \cite{CheskidovLuo2023} construct nonunique
solutions in $C([0,T];L^p)$ for every $p<2$. With horizontal viscosity only,
neither argument applies. The dissipation controls $\partial_1u$ but not $\partial_2u$, so
the Ladyzhenskaya estimate is unavailable, and the parabolic smoothing acts only in
horizontal frequencies. At the natural energy level of the equation, that is, for
$u\in L^\infty(0,T;L^2)$ with $\partial_1u\in L^2(0,T;L^2)$ assumed, Demmel and Wiedemann
\cite{DemmelWiedemann2026} recently proved the energy equality for $\Omega=\R^2$, by means
of a directional Riesz-transform estimate for the pressure and a renormalization argument
for the horizontal component. The periodic case is not treated there.
For the case $\Omega=\R^2$, our conclusion (i) of Theorem~\ref{thm:main} provides this energy regularity,
so their theorem yields the energy equality (Remark~\ref{rem:R2}).

In the present paper we start from distributional solutions in $L^\infty(0,T;L^2)$ only,
without assuming $\partial_1u\in L^2(0,T;L^2)$, and we determine conditions under which
this energy level is reached. This is the main point of the paper.
Theorem~\ref{thm:main} states that the Lions integrability together with
Onsager-critical regularity in the inviscid direction suffices. More precisely, if a weak
solution $u\in L^\infty(0,T;L^2)$ satisfies
$u\in L^4(0,T;L^4)\cap L^3(0,T;B^{1/3,v}_{3,\infty})$, with the Besov regularity measured
in $x_2$ alone, then $\partial_1u\in L^2(0,T;L^2)$ on both domains.
Under the $c_0$ refinement $u\in L^3(0,T;B^{1/3,v}_{3,c_0})$, the field $u$ lies
moreover in the isotropic critical Onsager space $L^3(0,T;B^{1/3}_{3,c_0})$ of
\cite{CCFS2008}, the energy equality holds, and $u\in C([0,T];L^2)$. Neither the isotropic
critical regularity nor the continuity in $L^2$ is assumed. The energy equality is proved
directly from the horizontal dissipation estimate and does not use the isotropic space.

The global well-posedness result of \cite{LZZ} assumes one full vertical derivative of the initial
datum in a Sobolev space. Here horizontal dissipation and the energy equality are
established for an existing weak solution under assumptions on the solution itself, with
one third of a vertical derivative in a time-integrated Besov space.

The proof starts from an energy balance for the field truncated at horizontal frequency $N$
and vertical frequency $Q$. The horizontal flux is absorbed by the horizontal dissipation,
with the commutator controlled by $\|u\|_{L^4}^2$, while the vertical flux is handled by a
one-dimensional commutator estimate in the spirit of \cite{CET1994,CCFS2008}, for which
$1/3$ is the critical exponent. Letting $N\to\infty$ and then $Q\to\infty$ yields the
horizontal dissipation estimate, and the energy equality then follows from the vertically
truncated balance alone.

The paper is organized as follows.
Section~\ref{sec:statement} states the main result and Section~\ref{sec:commutator}
collects the one-dimensional Littlewood--Paley and commutator estimates.
Section~\ref{sec:apriori-energy} proves the horizontal dissipation estimate, the isotropic
critical regularity, and the energy equality. Appendix~\ref{app:A} derives the doubly truncated energy balance.

\section{Statement of the result}\label{sec:statement}

We work on $\Omega = \Omega_1\times\Omega_2$, where $\Omega_1$ is the horizontal (viscous)
factor and $\Omega_2$ is the vertical (inviscid) factor, in either of the two cases
\begin{equation}
\begin{split}
\Omega = \Torus = \T\times\T,\quad \T:=\R/2\pi\Z, &\quad (\Omega_1=\Omega_2=\T),\\
\text{or}\qquad\Omega = \R^2 = \R\times\R, &\quad (\Omega_1=\Omega_2=\R).
\end{split}
\label{eq:domains}
\end{equation}
We write $x=(x_1,x_2)\in\Omega$ and consider
\begin{equation}
\partial_t u + \operatorname{div}(u\otimes u) + \nabla p - \nu \partial_1^2 u = 0,
\qquad \nabla\cdot u = 0
\label{eq:NS}
\end{equation}
on $(0,T)\times\Omega$, where $\nu>0$.

Throughout the paper, $L^2_\sigma(\Omega)$ denotes the space of divergence-free vector
fields in $L^2(\Omega;\R^2)$. We abbreviate mixed space-time norms by subscripts, writing
$L^p_tX$ for $L^p(0,T;X(\Omega))$ with $1\le p\le\infty$, and $L^p_tL^q_x$ for
$L^p(0,T;L^q(\Omega))$. We also write $L^2((0,T)\times\Omega)$ for $L^2(0,T;L^2(\Omega))$.
When the spatial domain is clear, we omit it from the notation for spaces and norms.
The test space $\mathscr S(\Omega)$ is $C^\infty(\Torus)$ when $\Omega=\Torus$ and the
Schwartz space $\mathscr S(\R^2)$ when $\Omega=\R^2$, and $\mathscr S'(\Omega)$ denotes its
dual, the space of distributions on $\Torus$ or of tempered distributions on $\R^2$.

Throughout, $C>0$ denotes a constant depending only on the fixed cutoffs $\chi,\varphi$
and the profile $\theta$ in Appendix~\ref{app:A}. It may vary from one occurrence to
another and is independent of $\nu,T,u,N,Q$. We call such constants \emph{universal}
and write $a\lesssim b$ if $a\le Cb$, and $a\sim b$ if $a\lesssim b\lesssim a$.

We use one-dimensional Littlewood--Paley decompositions in each of the two variables,
following the notation of \cite[Chapter~2]{BCD}. Let
$\mathcal B:=\{\xi\in\R:|\xi|<4/3\}$ and $\mathcal C:=\{\xi\in\R:3/4\le|\xi|\le8/3\}$. By
\cite[Proposition~2.10]{BCD} there exist even functions $\chi\in C_c^\infty(\mathcal B)$
and $\varphi\in C_c^\infty(\mathcal C)$, with values in $[0,1]$, such that
\begin{equation}
\chi(\xi)+\sum_{q\ge0}\varphi(2^{-q}\xi)=1
\qquad\text{for every } \xi\in\R .
\label{eq:partition}
\end{equation}
We fix such a pair $(\chi,\varphi)$ once and for all.

The dual variable $\xi$ runs over $\R$ when $\Omega_2=\R$ and over $\Z$ when
$\Omega_2=\T$, and since \eqref{eq:partition} holds on all of $\R$, it holds in particular
on $\Z$. For a symbol $a\in C_c^\infty(\R)$ and $f\in\mathscr S(\Omega)$, set
\[
a(D_2)f(x_1,x_2)=
\begin{cases}
\displaystyle\frac1{2\pi}\int_\R a(\xi)\,\widehat f(x_1,\xi)\,e^{\,ix_2\xi}\,d\xi,
& \Omega_2=\R,\\[8pt]
\displaystyle\sum_{n\in\Z} a(n)\,\widehat f(x_1,n)\,e^{\,inx_2},
& \Omega_2=\T,
\end{cases}
\]
where we use the Fourier transform and Fourier coefficients
\[
\begin{aligned}
\widehat f(x_1,\xi)
&:=\int_\R f(x_1,y)e^{-iy\xi}\,dy,
&& \Omega_2=\R,\quad \xi\in\R,\\[4pt]
\widehat f(x_1,n)
&:=\frac1{2\pi}\int_{-\pi}^{\pi}f(x_1,y)e^{-iny}\,dy,
&& \Omega_2=\T,\quad n\in\Z.
\end{aligned}
\]
The corresponding inversion formulas are
\[
f(x_1,x_2)=
\begin{cases}
\displaystyle\frac1{2\pi}\int_\R\widehat f(x_1,\xi)e^{ix_2\xi}\,d\xi,
& \Omega_2=\R,\\[8pt]
\displaystyle\sum_{n\in\Z}\widehat f(x_1,n)e^{inx_2},
& \Omega_2=\T.
\end{cases}
\]
The operator $a(D_2)$ maps $\mathscr S(\Omega)$ continuously into itself, and it is symmetric
when $a$ is even. For the even symbols used below, $a(D_2)$ therefore extends to
$\mathscr S'(\Omega)$ by duality, $\langle a(D_2)f,g\rangle:=\langle f,a(D_2)g\rangle$ for
$f\in\mathscr S'(\Omega)$ and $g\in\mathscr S(\Omega)$. With this reading,
Definition~\ref{def:blocks} applies without change to both domains. The multiplier
$a(D_1)$ is defined similarly in the horizontal variable.

\begin{definition}\label{def:blocks}
Let $f\in\mathscr S'(\Omega)$. Following \cite[Definition~2.11]{BCD}, set
\[
\begin{aligned}
\Delta_{-1}^h f &= \chi(D_1)f, &\qquad
\Delta_q^h f &= \varphi(2^{-q}D_1)f \quad(q\ge0),\\
\Delta_{-1}^v f &= \chi(D_2)f, &
\Delta_q^v f &= \varphi(2^{-q}D_2)f \quad(q\ge0),
\end{aligned}
\]
with $\Delta_q^h f=\Delta_q^v f=0$ for $q\le-2$, and define
\[
\begin{aligned}
S_N^h f &= \sum_{q\le N}\Delta_q^h f = \chi(2^{-N-1}D_1)f,\qquad N\ge-1,\\
S_Q^v f &= \sum_{q\le Q}\Delta_q^v f = \chi(2^{-Q-1}D_2)f,\qquad Q\ge-1.
\end{aligned}
\]
\end{definition}

Throughout, $q$ and $j$ denote dyadic block indices, while $N$ denotes a horizontal
truncation level and $Q$ a vertical one. When a statement holds for either truncation,
we write $M$ for its level. All dyadic indices and truncation levels in this paper are integers.

\begin{definition}\label{def:besov}
For $s\in\R$ and $1\le p\le\infty$, let $B^{s,v}_{p,\infty}(\Omega)$ be the space of
$f\in\mathscr S'(\Omega)$ with
\begin{equation}
\|f\|_{B^{s,v}_{p,\infty}} := \sup_{q\ge -1} 2^{qs}\|\Delta_q^v f\|_{L^p(\Omega)}
<\infty ,
\label{eq:besov}
\end{equation}
and let $B^{s,v}_{p,c_0}(\Omega)$ be the subspace of those $f\in B^{s,v}_{p,\infty}(\Omega)$
for which
\begin{equation}
\lim_{q\to\infty} 2^{qs}\|\Delta_q^v f\|_{L^p(\Omega)} = 0.
\label{eq:czero}
\end{equation}
\end{definition}

The superscript $v$ indicates that the decomposition is performed only in the vertical
variable. The $c_0$ refinement of the critical Besov space is that of \cite{CCFS2008}.

The isotropic decomposition entering part (ii) of Theorem~\ref{thm:main} rests on
two-dimensional symbols. For $b\in C_c^\infty(\R^2)$ and $f\in\mathscr S(\Omega)$, the
operator $b(D)f$ is defined by the formulas above with $\R$ replaced by $\R^2$ and $\Z$ by
$\Z^2$, and it extends to $\mathscr S'(\Omega)$ by duality when $b$ is even. The two
symbols below are of this kind. Both are even and compactly supported, and both are
smooth: $\varphi$ vanishes on $\{|\xi|<3/4\}$, so $\varphi(2^{-q}|\cdot|)$ vanishes near
the origin and is smooth away from it, while $\chi\equiv1$ on $\{|\xi|<3/4\}$ by
\eqref{eq:partition}, so $\chi(|\cdot|)$ is constant near the origin.

\begin{definition}\label{def:isoblocks}
For $f\in\mathscr S'(\Omega)$, set
\[
\Delta_{-1}f=\chi(|D|)f,\qquad
\Delta_qf=\varphi(2^{-q}|D|)f\quad(q\ge0),\qquad
\Delta_qf=0\quad(q\le-2),
\]
\[
S_Qf=\sum_{q\le Q}\Delta_qf=\chi(2^{-Q-1}|D|)f,\qquad Q\ge-1.
\]
\end{definition}

\begin{definition}\label{def:isobesov}
For $s\in\R$ and $1\le p\le\infty$, let $B^s_{p,\infty}(\Omega)$ be the space of
$f\in\mathscr S'(\Omega)$ with
\[
\|f\|_{B^s_{p,\infty}}:=\sup_{q\ge-1}2^{qs}\|\Delta_qf\|_{L^p(\Omega)}<\infty,
\]
and let $B^s_{p,c_0}(\Omega)$ be the subspace of those $f\in B^s_{p,\infty}(\Omega)$ for which
\[
\lim_{q\to\infty}2^{qs}\|\Delta_qf\|_{L^p(\Omega)}=0.
\]
\end{definition}

\begin{samepage}
\begin{remark}\label{rem:BCD}
Our cutoffs $\chi,\varphi:\R\to[0,1]$ are even functions, obtained from
\cite[Proposition~2.10]{BCD} with $d=1$. In the isotropic decomposition, the radial
functions $\xi\mapsto\chi(|\xi|)$ and $\xi\mapsto\varphi(|\xi|)$, with $\xi\in\R^2$,
play the role of the two-dimensional cutoffs denoted by $\chi,\varphi$ in \cite{BCD}.

Our truncations sum over $k\le j$, whereas \cite{BCD} defines
$S_j=\sum_{k<j}\Delta_k$. Thus our $S_N^h$ and $S_Q^v$ correspond to $S_{N+1}$ and
$S_{Q+1}$ of \cite{BCD} applied in the respective coordinate directions, and our
isotropic $S_Q$ corresponds to their $S_{Q+1}$.
\end{remark}
\end{samepage}

\begin{definition}\label{def:weak}
Let $u_0\in L^2_\sigma(\Omega)$. A \emph{weak solution} of \eqref{eq:NS} on $[0,T]$ with
initial datum $u_0$ is a vector field $u\in L^\infty(0,T;L^2_\sigma(\Omega))$ such that,
for every divergence-free $\Phi\in C_c^\infty([0,T)\times\Omega;\R^2)$,
\begin{equation}
\int_0^T\!\!\int_\Omega \Bigl( u\cdot\partial_t\Phi + u\otimes u:\nabla\Phi
+ \nu\, u\cdot\partial_1^2\Phi \Bigr)\,dx\,dt
+\int_\Omega u_0\cdot\Phi(0,\cdot)\,dx = 0.
\label{eq:weakform}
\end{equation}
\end{definition}

\begin{remark}\label{rem:weakcont}
Every weak solution has a representative, again denoted $u$, which is weakly continuous
from $[0,T]$ into $L^2_\sigma(\Omega)$ and satisfies
\begin{equation}
u(0)=u_0,
\qquad
\|u(t)\|_{L^2}\le\|u\|_{L^\infty_tL^2_x}
\quad\text{for every } t\in[0,T].
\label{eq:weakcontbound}
\end{equation}
Indeed, inserting $\Phi=\eta\phi$ into \eqref{eq:weakform}, with
$\eta\in C_c^\infty([0,T))$ and $\phi$ smooth, divergence-free and compactly supported,
shows that $t\mapsto\int_\Omega u\cdot\phi\,dx$ agrees almost everywhere with an
absolutely continuous function $\ell_\phi$ on $[0,T]$ satisfying
\[
\ell_\phi(0)=\int_\Omega u_0\cdot\phi\,dx,
\qquad
|\ell_\phi(t)|
\le\|u\|_{L^\infty_tL^2_x}\|\phi\|_{L^2}
\quad\text{for every } t\in[0,T].
\]
The uniqueness of continuous representatives makes $\phi\mapsto\ell_\phi(t)$ linear.
By Lemma~\ref{lem:density} and the Riesz representation theorem, these pairings define
$u(t)\in L^2_\sigma(\Omega)$ satisfying \eqref{eq:weakcontbound}.
Using a countable dense family of test functions shows that this is a representative of the
original solution. Finally, density and the uniform bound above imply that
$t\mapsto(u(t),v)_{L^2}$ is continuous for every $v\in L^2_\sigma(\Omega)$.
We always use this representative below.
\end{remark}
\begin{theorem}\label{thm:main}
Let $T>0$, $\nu>0$, and $u_0\in L^2_\sigma(\Omega)$. Let $u$ be a weak solution of
\eqref{eq:NS} on $[0,T]$ with initial datum $u_0$ in the sense of
Definition~\ref{def:weak}, and assume
\begin{equation}
u\in L^4(0,T;L^4(\Omega)),
\label{eq:hypL4}
\end{equation}
\begin{equation}
u\in L^3(0,T;B^{1/3,v}_{3,\infty}(\Omega)).
\label{eq:besovweak}
\end{equation}
Then
\begin{enumerate}
\item[(i)] $\partial_1u\in L^2(0,T;L^2(\Omega))$, and
\begin{equation}
\begin{split}
\nu\int_0^T\|\partial_1u(t)\|_{L^2}^2\,dt
&\;\le\;\|u_0\|_{L^2}^2
+\frac{C}{\nu}\,\|u\|_{L^4(0,T;L^4)}^4\\
&\quad+C\int_0^T\|u(t)\|_{B^{1/3,v}_{3,\infty}}^3\,dt,
\end{split}
\label{eq:apriori}
\end{equation}
with $C$ a universal constant, independent in particular of $\nu$, of $T$ and of $u$.
\end{enumerate}
If, in addition,
\begin{equation}
u\in L^3(0,T;B^{1/3,v}_{3,c_0}(\Omega)),
\label{eq:besovhyp}
\end{equation}
then
\begin{enumerate}
\item[(ii)] $u$ belongs to the isotropic critical Onsager space,
\begin{equation}
u\in L^3\bigl(0,T;B^{1/3}_{3,c_0}(\Omega)\bigr).
\label{eq:isoconc}
\end{equation}
\item[(iii)] The energy equality
\begin{equation}
\frac12\|u(t)\|_{L^2}^2 + \nu\int_0^t \|\partial_1 u(s)\|_{L^2}^2\,ds
= \frac12\|u_0\|_{L^2}^2
\label{eq:energyeq}
\end{equation}
holds for every $t\in[0,T]$, and $u\in C([0,T];L^2(\Omega))$.
\end{enumerate}
\end{theorem}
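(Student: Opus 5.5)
Let $u^{N,Q}:=S_N^hS_Q^vu$. I would test \eqref{eq:weakform} against $S_N^hS_Q^v$ applied to $S_N^hS_Q^vu$, suitably mollified in time. The doubly truncated balance derived in Appendix~\ref{app:A} then gives, for a.e.\ $t$,
\[
\tfrac12\|u^{N,Q}(t)\|_{L^2}^2+\nu\int_0^t\|\partial_1u^{N,Q}\|_{L^2}^2\,ds=\tfrac12\|S_N^hS_Q^vu_0\|_{L^2}^2+\int_0^t\Pi_{N,Q}\,ds .
\]
The flux is $\Pi_{N,Q}=\int (u\otimes u)^{N,Q}:\nabla u^{N,Q}\,dx$. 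The pressure drops out because the multipliers commute with the Leray projector and $u^{N,Q}$ is divergence free. Every term is finite because $u^{N,Q}$ is smooth in both variables.

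I split the flux as $\Pi_{N,Q}=\Pi^h+\Pi^v$, according to whether the derivative is $\partial_1$ or $\partial_2$.

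For the horizontal part, write $\Pi^h=\int (u\,u_1)^{N,Q}\cdot\partial_1u^{N,Q}$. Since $\|(uu_1)^{N,Q}\|_{L^2}\lesssim\|u\|_{L^4}^2$ (the multipliers are uniformly $L^2$-bounded), Young's inequality gives
\[
|\Pi^h|\le\tfrac\nu2\|\partial_1u^{N,Q}\|_{L^2}^2+\tfrac C\nu\|u\|_{L^4}^4 .
\]
The first term is absorbed into the dissipation. This absorption, rather than cancellation, is what makes it unnecessary to assume $\partial_1u\in L^2$.

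For the vertical part, I use the CET/CCFS commutator form only in $x_2$. Since $\int u^{N,Q}_2\,\partial_2u^{N,Q}\cdot u^{N,Q}\,dx$ vanishes only together with the $\partial_1$ analogue, I first write the full trilinear term $\int u^{N,Q}\otimes u^{N,Q}:\nabla u^{N,Q}=0$ and subtract it. What remains is the commutator $(u\otimes u)^{N,Q}-u^{N,Q}\otimes u^{N,Q}$ paired with $\nabla u^{N,Q}$. Its $\partial_1$ piece is absorbed by the $L^4$ bound exactly as above, with a bound uniform in $N,Q$. Its $\partial_2$ piece is estimated by a one-dimensional commutator lemma from Section~\ref{sec:commutator}. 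That lemma should give
\[
\Bigl|\int [S^v_Q,\,u\otimes]u:\partial_2 S_Q^vu\Bigr|\lesssim \sum_{j}2^{-\frac23|Q-j|}\,2^{j}\|\Delta_j^vu\|_{L^3}^3 ,
\]
with $S_N^h$ harmless since it is $L^3$-bounded and commutes with the vertical pieces. At exponent $1/3$ this is $\lesssim\|u\|_{B^{1/3,v}_{3,\infty}}^3$, and under \eqref{eq:besovhyp} it tends to $0$ as $Q\to\infty$ by dominated convergence in $t$.

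Integrating in time and letting $N\to\infty$ (monotone convergence on the dissipation), then $Q\to\infty$, gives \eqref{eq:apriori} by lower semicontinuity. This proves (i). The main obstacle is making the commutator uniform in $N$ while the dissipation depends only on horizontal frequencies. My plan is to keep $S_N^h$ outside all vertical commutators and to use only its $L^p$-boundedness.

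For (iii), I redo the balance with the vertical truncation $S_Q^v$ alone. This is now legitimate because (i) gives $\partial_1u\in L^2L^2$. The horizontal flux is handled by the commutator identity: since $\partial_1 S_Q^vu\to\partial_1u$ in $L^2L^2$ and $(uu_1)^Q\to uu_1$ in $L^2L^2$, it converges to $\int uu_1\cdot\partial_1u=0$. The vertical flux tends to $0$ by the $c_0$ estimate. This gives the energy equality for every $t$, using weak continuity and the lower semicontinuity argument on both sides, run forward and backward. Continuity of $t\mapsto\|u(t)\|_{L^2}$ combined with weak continuity then yields $u\in C([0,T];L^2)$.

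For (ii), I decompose each isotropic block as $\Delta_qu=\Delta_q(\text{vertical part at }\sim2^q)+\Delta_q(\text{horizontal-dominant part})$. The first piece is controlled by $\|\Delta^v_{j}u\|_{L^3}$ with $j\approx q$, which is $o(2^{-q/3})$. For the second piece I use Bernstein: $2^{q}\|\Delta^h_q u\|_{L^2}\lesssim\|\partial_1u\|_{L^2}$, and I interpolate between $L^2$ and $L^4$ to reach $L^3$. This gives $2^{q/3}\|\cdot\|_{L^3}\lesssim 2^{-q/6}\|\partial_1u\|_{L^2}^{1/3}\|u\|_{L^4}^{2/3}$, which is in $L^3_t$ and decays in $q$.
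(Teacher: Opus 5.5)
Your overall architecture matches the paper's. You use the doubly truncated balance, absorb the $\partial_1$-part of the commutator pairing via $\|R_{N,Q}(u)\|_{L^2}\lesssim\|u\|_{L^4}^2$ and Young's inequality, and apply a one-dimensional vertical commutator estimate at exponent $1/3$. You then take $N\to\infty$ followed by $Q\to\infty$, use a vertically truncated balance for (iii), and a block decomposition with $L^2$--$L^4$ interpolation for (ii).

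\textbf{The gap} is in the step you flag as the main obstacle. Since $S_N^h$ acts on \emph{both} factors of $u^{N,Q}\otimes u^{N,Q}$, the commutator produced by the energy identity is
\[
S_N^hS_Q^v(u\otimes u)-u^{N,Q}\otimes u^{N,Q}=S_N^hR_Q^v(u)+R_N^h(S_Q^vu),
\]
and not $S_N^h$ applied to a vertical commutator. Keeping $S_N^h$ outside would require subtracting $\int_\Omega S_N^h(S_Q^vu\otimes S_Q^vu):\nabla u^{N,Q}\,dx$, which does not vanish. Your vertical lemma, together with the $L^{3/2}$- and $L^3$-boundedness of $S_N^h$, controls only the first piece.

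The second piece contributes $\int_\Omega (R_N^h(S_Q^vu))^{\cdot2}\cdot\partial_2u^{N,Q}\,dx$, a horizontal commutator paired with a vertical derivative. It cannot be absorbed by the dissipation. The bound available from the vertical Besov norm, $\|S_Q^vu\|_{L^3}^2\|\partial_2S_Q^vu\|_{L^3}\lesssim2^{2Q/3}\|u\|_{B^{1/3,v}_{3,\infty}}^3$, blows up as $Q\to\infty$.

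The missing step is to kill this term in the limit $N\to\infty$ at fixed $Q$. Vertical Bernstein gives $\|\partial_2u^{N,Q}(t)\|_{L^2}\lesssim2^Q\|u\|_{L^\infty_tL^2_x}$ uniformly in $N$. Lemma~\ref{lem:L2comm} with $p=4$ gives $R_N^h(S_Q^vu(t))\to0$ in $L^2$, with envelope $2^Q\|u\|_{L^\infty_tL^2_x}\|u(t)\|_{L^4}^2\in L^1(0,T)$. Dominated convergence then sends the time integral of this term to zero. This is the term $J_{N,Q}$ in the paper, and it is exactly where the order of limits is used. Without it, the uniformity in $N$ you aim for is not achieved.

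\textbf{A smaller slip in (ii)} affects the $c_0$ conclusion. Interpolation gives
\[
2^{q/3}\|\Delta_q^hu\|_{L^3}\le\bigl(2^q\|\Delta_q^hu\|_{L^2}\bigr)^{1/3}\|\Delta_q^hu\|_{L^4}^{2/3},
\]
and the powers of $2^q$ cancel exactly, so there is no factor $2^{-q/6}$. With the global $\|\partial_1u\|_{L^2}$ on the right you get only a uniform bound, not decay. You must keep the localized norm, $2^q\|\Delta_q^hu\|_{L^2}\lesssim\|\Delta_q^h\partial_1u\|_{L^2}$. This tends to $0$ for a.e.\ $t$ because it is a high-frequency tail of $\partial_1u(t)\in L^2$, and the $L^3_t$ bound then follows from Cauchy--Schwarz in time.

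\textbf{Part (iii)} is fine. The identity $\int_\Omega u_1\,u\cdot\partial_1u\,dx=0$ does hold for divergence-free fields in two dimensions; it can be justified by approximating with $S_Q^vu\in H^1$. In fact each directional part of the trilinear form vanishes separately in 2D, so your remark in (i) is inaccurate, though harmless there. The paper avoids this identity by using $R_Q^v(u)\to0$ in $L^2((0,T)\times\Omega)$ directly.
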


\begin{remark}\label{rem:shinbrot}
By interpolation with $u\in L^\infty(0,T;L^2(\Omega))$, hypothesis
\eqref{eq:hypL4} is implied by $u\in L^p(0,T;L^q(\Omega))$ with
$2/p+2/q\le1$ and $q\ge4$, so the conditions of Shinbrot type are covered as well.
In particular, the global deterministic solutions constructed in \cite{LZZ} for initial
data $u_0\in\widetilde H^{0,1}(\Omega)$ satisfy \eqref{eq:hypL4} and \eqref{eq:besovhyp}
on every finite time interval.
\end{remark}

\begin{remark}\label{rem:R2}
The following connection with the result of Demmel and Wiedemann
\cite{DemmelWiedemann2026} applies only when $\Omega=\R^2$.
Conclusion (i) of Theorem~\ref{thm:main} shows that $u$ belongs to their energy class, with
$\partial_1u\in L^2(0,T;L^2(\R^2))$. Their energy-rigidity theorem, applied after the
scaling $u(\nu t,\nu x)$ to unit viscosity, then yields the energy equality
\eqref{eq:energyeq} under \eqref{eq:hypL4} and \eqref{eq:besovweak}, without the $c_0$
refinement \eqref{eq:besovhyp}. The proof below treats both domains independently of
their result.
\end{remark}

\section{One-dimensional truncations and commutator estimates}\label{sec:commutator}

We use the horizontal and vertical Littlewood--Paley decompositions of
Definition~\ref{def:blocks}.

The two truncations act in different variables and commute with each other and with all
partial derivatives. Since $0\le\chi\le1$ pointwise, the operators $S_N^h$
and $S_Q^v$ have $L^2(\Omega)\to L^2(\Omega)$ operator norm at most $1$.

The blocks $\Delta_q^h$ and $\Delta_q^v$ localize the Fourier support in one variable
alone. Bernstein's inequality \cite[Lemma~2.1]{BCD} is therefore applied to them in that
variable, for almost every value of the other one, and integration in the latter turns it
into the bound on $\Omega$ used below. On $\T$ the blocks are Fourier series multipliers,
and the periodic Bernstein inequality yields the same estimates.

Each truncation is a convolution in its own variable, against a dilate of one fixed function
on the line, periodized when the factor is compact. Since $\Omega_1=\Omega_2$ in both cases
of \eqref{eq:domains}, the same family of kernels can be used in both directions.

\begin{definition}\label{def:kernels}
Let $\check\chi$ be the inverse Fourier transform of $\chi$, a fixed Schwartz function on
$\R$, and for $M\ge-1$ let
\begin{equation}
\gamma_M(z) := 2^{M+1}\,\check\chi\bigl(2^{M+1}z\bigr),
\qquad z\in\R,
\label{eq:linekernel}
\end{equation}
be the kernel of $\chi(2^{-M-1}D)$ on the real line. Set
\begin{equation}
K_M := \gamma_M \ \text{ if } \Omega_2=\R,
\qquad
K_M := \sum_{k\in\Z}\gamma_M(\cdot+2\pi k) \ \text{ if } \Omega_2=\T,
\label{eq:kernels}
\end{equation}
the periodization being that given by Poisson summation, the series converging
absolutely and uniformly because $\check\chi$ is Schwartz.
\end{definition}

These kernels represent the truncations: for $f\in L^p(\Omega)$ with
$1\le p\le\infty$,
\begin{equation}
S_Q^v f(x) = \int_{\Omega_2} K_Q(z)\, f(x_1,x_2-z)\,dz,
\quad
S_N^h f(x) = \int_{\Omega_1} K_N(z)\, f(x_1-z,x_2)\,dz .
\label{eq:convform}
\end{equation}
When $\Omega_2=\T$, we identify $\T$ with $(-\pi,\pi]$, and $|z|$ denotes the absolute
value of the representative of $z\in\T$ in that interval. In particular
$|z|\le|z+2\pi k|$ for every $z\in(-\pi,\pi]$ and every $k\in\Z$.

\begin{definition}\label{def:comm}
For a function $f$ on $\Omega$ and $z\in\Omega_2$, define its vertical increment by
\begin{equation}
\delta_z^v f(x_1,x_2) := f(x_1,x_2-z) - f(x_1,x_2).
\label{eq:increment}
\end{equation}
For $2\le p<\infty$ and $f\in L^p(\Omega;\R^2)$, define
\begin{equation}
\begin{split}
R_Q^v(f) &:= S_Q^v(f\otimes f) - S_Q^v f\otimes S_Q^v f,\\
R_N^h(f) &:= S_N^h(f\otimes f) - S_N^h f\otimes S_N^h f.
\end{split}
\label{eq:RQv}
\end{equation}
\end{definition}

The assertions of the first lemma are one-dimensional statements about a single variable,
and hold without change on $\T$ and on $\R$, hence for both truncations on both domains.

\begin{lemma}\label{lem:kernelbasic}
For every $M\ge-1$,
\begin{equation}
\|K_M\|_{L^1(\Omega_2)}\lesssim1,
\label{eq:kernelL1}
\end{equation}
\begin{equation}
\int_{\Omega_2}K_M(z)\,dz = 1 .
\label{eq:norm}
\end{equation}
Consequently $S_Q^v$ and $S_N^h$ are uniformly bounded on $L^p(\Omega)$,
$1\le p\le\infty$, and
\begin{equation}
\|S_Q^vf-f\|_{L^p}\to0 \quad(Q\to\infty),
\qquad
\|S_N^hf-f\|_{L^p}\to0 \quad(N\to\infty),
\label{eq:approxid}
\end{equation}
for every $f\in L^p(\Omega)$ with $1\le p<\infty$. Moreover, for every
$f\in\mathscr S'(\Omega)$,
\begin{equation}
S_Q^vf\to f \quad(Q\to\infty),
\qquad
S_N^hf\to f \quad(N\to\infty),
\label{eq:approxSprime}
\end{equation}
with convergence in that space.
\end{lemma}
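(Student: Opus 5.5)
The plan is to reduce every assertion to a one-dimensional statement about the line kernel $\gamma_M$, using scaling on $\R$ and Poisson summation on $\T$. First, on the real line, the substitution $w=2^{M+1}z$ gives
\[
\|\gamma_M\|_{L^1(\R)}=\|\check\chi\|_{L^1(\R)}
\qquad\text{and}\qquad
\int_\R\gamma_M=\int_\R\check\chi=\chi(0)=1 .
\]
The last identity holds because $\chi\equiv1$ near the origin; this follows from \eqref{eq:partition}, since $\varphi(2^{-q}\cdot)$ vanishes on $\{|\xi|<3/4\}$ for every $q\ge0$. Since $\check\chi$ is Schwartz, $\|\check\chi\|_{L^1}$ is a universal constant.

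On $\T$, identified with $(-\pi,\pi]$, the periodization satisfies $\|K_M\|_{L^1(\T)}\le\sum_k\int_{-\pi}^{\pi}|\gamma_M(z+2\pi k)|\,dz=\|\gamma_M\|_{L^1(\R)}$. Its integral equals $\int_\R\gamma_M=1$; the interchange of sum and integral is justified by absolute convergence. This gives \eqref{eq:kernelL1} and \eqref{eq:norm}.

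Next comes uniform $L^p$-boundedness. By \eqref{eq:convform}, $S_Q^v$ is a convolution in $x_2$ alone. Minkowski's integral inequality, applied for a.e. fixed $x_1$ and then integrated in $x_1$, yields $\|S_Q^vf\|_{L^p(\Omega)}\le\|K_Q\|_{L^1}\|f\|_{L^p(\Omega)}\lesssim\|f\|_{L^p}$ for $1\le p\le\infty$. The horizontal truncation is handled the same way.

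For \eqref{eq:approxid}, I would run the standard approximate-identity argument. By \eqref{eq:norm},
\[
S_Q^vf-f=\int_{\Omega_2}K_Q(z)\,\delta^v_zf\,dz ,
\]
so that
\[
\|S_Q^vf-f\|_{L^p}\le\int|K_Q(z)|\,\|\delta_z^vf\|_{L^p}\,dz .
\]
Split the integral into $|z|<\eta$ and $|z|\ge\eta$. On the first region, continuity of translation in $L^p$ for $p<\infty$ makes $\|\delta_z^vf\|_{L^p}$ small, and the integral is at most $\sup_{|z|<\eta}\|\delta^v_zf\|_{L^p}\cdot C$. On the second region we have $\|\delta_z^vf\|_{L^p}\le2\|f\|_{L^p}$. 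It therefore remains to check the tail bound
\[
\int_{|z|\ge\eta}|K_Q(z)|\,dz\to0\qquad (Q\to\infty).
\]
On $\R$ this equals $\int_{|w|\ge2^{Q+1}\eta}|\check\chi|\to0$. On $\T$ it is bounded by $\int_{|y|\ge\eta}|\gamma_Q(y)|\,dy$, by the remark that $|z|\le|z+2\pi k|$ (each term $\gamma_Q(\cdot+2\pi k)$ on $\{\eta\le|z|\le\pi\}$ maps to a region $|y|\ge\eta$), so the same limit applies. This tail estimate on the torus is the only slightly delicate point.

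Finally, for \eqref{eq:approxSprime}, I would use duality. Since $\chi$ is even, $\langle S_Q^vf,g\rangle=\langle f,S_Q^vg\rangle$, so it suffices to show $S_Q^vg\to g$ in $\mathscr S(\Omega)$ for each test function $g$. The seminorms are $\sup|x^\alpha\partial^\beta(\cdot)|$ on $\R^2$, and just derivatives on $\Torus$. On the Fourier side in $\xi_2$, the symbol $1-\chi(2^{-Q-1}\xi_2)$ is supported in $|\xi_2|\ge\tfrac34 2^{Q+1}$, and its derivatives are bounded uniformly in $Q$. Polynomial weights become derivatives in $\xi$, and these fall either on $\widehat g$ (still rapidly decaying) or on the symbol. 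Hence each seminorm of $S_Q^vg-g$ is controlled by weighted $L^1$ norms of derivatives of $\widehat g$ over $|\xi_2|\gtrsim2^Q$, and these tend to $0$ by rapid decay. On $\Torus$ the same holds with sums over $\Z^2$ and no weights. The horizontal case is identical.
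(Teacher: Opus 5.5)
Your proposal is correct and follows essentially the same route as the paper: scaling and periodization for $\|K_M\|_{L^1}$ and $\int K_M=\chi(0)=1$, Young/Minkowski for the uniform $L^p$ bounds, the approximate-identity argument with the torus tail bound $\int_{|z|>\eta}|K_Q|\le\int_{|y|>\eta}|\gamma_Q|$ via $|z+2\pi k|\ge|z|$, and symmetry of the multipliers plus convergence on the test space for the $\mathscr S'$ statement. You spell out a few details the paper leaves implicit, namely the $\eta$-splitting and the Fourier-side seminorm estimate on $\{|\xi_2|\ge\tfrac34 2^{Q+1}\}$.
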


\begin{proof}
By scaling and periodization, on either domain,
\[
\|K_M\|_{L^1(\Omega_2)}\le\|\gamma_M\|_{L^1(\R)}
=\|\check\chi\|_{L^1(\R)},
\qquad
\int_{\Omega_2}K_M(z)\,dz=\chi(0)=1,
\]
where the last equality follows from \eqref{eq:partition} at $\xi=0$.
The uniform $L^p$ bounds follow by Young's inequality. Also,
\begin{equation}
S_Q^vf-f=\int_{\Omega_2}K_Q(z)\,\delta_z^vf\,dz .
\label{eq:approxdiff}
\end{equation}
For every $\eta>0$,
\[
\int_{\{z\in\Omega_2:\,|z|>\eta\}}|K_Q(z)|\,dz
\le\int_{|y|>\eta}|\gamma_Q(y)|\,dy\longrightarrow0,
\]
where the inequality is an identity on $\R$ and follows on $\T$ by periodization and
$|z+2\pi k|\ge|z|$. Thus \eqref{eq:approxdiff}, Minkowski's inequality and continuity of translation in
$L^p$ for $p<\infty$ yield \eqref{eq:approxid}, and the horizontal case is identical.
Finally, the smooth cutoff multipliers converge to the identity on the test space
$\mathscr S(\Omega)$ by rapid Fourier decay. Since these multipliers are symmetric,
\eqref{eq:approxSprime} follows by duality.
\end{proof}

\begin{lemma}\label{lem:L2comm}
Let $2\le p<\infty$, and let $S_M$ denote either $S_N^h$ or $S_Q^v$.
Correspondingly, $R_M$ denotes $R_N^h$ or $R_Q^v$, respectively, as defined in
\eqref{eq:RQv}.
Then, for every $f\in L^p(\Omega;\R^2)$,
\begin{equation}
\|R_M(f)\|_{L^{p/2}(\Omega)} \lesssim \|f\|_{L^p(\Omega)}^2
\qquad\text{uniformly in } M,
\label{eq:L2commbound}
\end{equation}
and
\begin{equation}
R_M(f)\to0 \quad\text{in } L^{p/2}(\Omega)\text{ as } M\to\infty.
\label{eq:L2commlimit}
\end{equation}
\end{lemma}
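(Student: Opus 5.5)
The plan is to obtain both assertions from the uniform $L^p$ boundedness and the approximation property of the truncations in Lemma~\ref{lem:kernelbasic}, combined with Hölder's inequality. No commutator structure is needed at this level; the quadratic bound \eqref{eq:L2commbound} is crude.

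For the bound \eqref{eq:L2commbound}, I treat the two terms of $R_M(f)$ separately. Since $f\in L^p$, Hölder gives $f\otimes f\in L^{p/2}$ with $\|f\otimes f\|_{L^{p/2}}\le\|f\|_{L^p}^2$. Because $p/2\ge1$, Lemma~\ref{lem:kernelbasic} (Young's inequality with $\|K_M\|_{L^1}\lesssim1$) yields $\|S_M(f\otimes f)\|_{L^{p/2}}\lesssim\|f\|_{L^p}^2$. For the second term, Hölder together with the uniform $L^p$ bound for $S_M$ gives
\[
\|S_Mf\otimes S_Mf\|_{L^{p/2}}\le\|S_Mf\|_{L^p}^2\lesssim\|f\|_{L^p}^2 .
\]
All constants depend only on $\|\check\chi\|_{L^1}$, so the bound is uniform in $M$ and holds on both domains and for both directions.

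For the limit \eqref{eq:L2commlimit}, I write
\[
R_M(f)=\bigl(S_M(f\otimes f)-f\otimes f\bigr)-\bigl(S_Mf\otimes S_Mf-f\otimes f\bigr).
\]
The first bracket tends to $0$ in $L^{p/2}$ by \eqref{eq:approxid}, applied with exponent $p/2\in[1,\infty)$. This is exactly where the hypothesis $p<\infty$ enters. For the second bracket I use the algebraic identity
\[
S_Mf\otimes S_Mf-f\otimes f=(S_Mf-f)\otimes S_Mf+f\otimes(S_Mf-f).
\]
Hölder then bounds it in $L^{p/2}$ by $\|S_Mf-f\|_{L^p}\bigl(\|S_Mf\|_{L^p}+\|f\|_{L^p}\bigr)\lesssim\|S_Mf-f\|_{L^p}\|f\|_{L^p}$. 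This tends to $0$ by \eqref{eq:approxid} with exponent $p$.

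I expect no step to be a genuine obstacle. The only point needing care is the endpoint $p=2$, where $p/2=1$. There the Young bound and the $L^1$ approximation \eqref{eq:approxid} are still covered by Lemma~\ref{lem:kernelbasic}, since it allows $p=1$, so the argument goes through unchanged.
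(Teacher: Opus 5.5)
Your proposal is correct and follows the paper's proof. The bound comes from the uniform $L^{p/2}$- and $L^p$-boundedness of $S_M$ (Lemma~\ref{lem:kernelbasic}) together with $\|f\otimes f\|_{L^{p/2}}=\|f\|_{L^p}^2$, and the limit comes from \eqref{eq:approxid} in $L^{p/2}$ and $L^p$ combined with H\"older; your explicit splitting of $S_Mf\otimes S_Mf-f\otimes f$ simply writes out the H\"older step that the paper leaves implicit.
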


\begin{proof}
The estimate follows from the uniform $L^{p/2}$- and $L^p$-boundedness of $S_M$
(Lemma~\ref{lem:kernelbasic}) and $\|f\otimes f\|_{L^{p/2}}=\|f\|_{L^p}^2$. Moreover, by
\eqref{eq:approxid}, $S_M(f\otimes f)\to f\otimes f$ in $L^{p/2}$ and $S_Mf\to f$ in
$L^p$. H\"older's inequality then implies $S_Mf\otimes S_Mf\to f\otimes f$ in $L^{p/2}$, and
hence $R_M(f)\to0$ in $L^{p/2}$.
\end{proof}

\begin{lemma}\label{lem:embedL3}
The vertical Besov space of Definition~\ref{def:besov} embeds continuously into
$L^3(\Omega)$,
\begin{equation}
B^{1/3,v}_{3,\infty}(\Omega)\subset L^3(\Omega),
\qquad
\|f\|_{L^3}\lesssim\|f\|_{B^{1/3,v}_{3,\infty}} .
\label{eq:embedL3}
\end{equation}
\end{lemma}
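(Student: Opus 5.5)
The plan is to write $f$ as the sum of its vertical dyadic blocks and sum their $L^3$ norms using the positive regularity index $1/3$. Let $f\in B^{1/3,v}_{3,\infty}(\Omega)$. Then
\[
\sum_{q\ge-1}\|\Delta_q^v f\|_{L^3}\le\sum_{q\ge-1}2^{-q/3}\,\|f\|_{B^{1/3,v}_{3,\infty}}\lesssim\|f\|_{B^{1/3,v}_{3,\infty}}.
\]
So the series $\sum_q\Delta_q^v f$ converges absolutely in $L^3(\Omega)$ to some $g\in L^3(\Omega)$, and $\|g\|_{L^3}\lesssim\|f\|_{B^{1/3,v}_{3,\infty}}$.

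It remains to show $g=f$. By Definition~\ref{def:blocks}, $S_Q^v f=\sum_{q\le Q}\Delta_q^v f$. Convergence in $L^3$ implies convergence in $\mathscr S'(\Omega)$, since pairing with a test function is continuous on $L^3$. Hence $S_Q^v f\to g$ in $\mathscr S'(\Omega)$. On the other hand, \eqref{eq:approxSprime} of Lemma~\ref{lem:kernelbasic} gives $S_Q^v f\to f$ in $\mathscr S'(\Omega)$. Uniqueness of limits then gives $f=g\in L^3(\Omega)$, which is \eqref{eq:embedL3}.

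The only delicate point is this identification on $\R^2$. For a general tempered distribution, a Littlewood--Paley series can converge only modulo polynomials. Here no such ambiguity arises: the low-frequency piece $\Delta_{-1}^v=\chi(D_2)$ is included, so the telescoped partial sums are exactly $S_Q^v f$, and Lemma~\ref{lem:kernelbasic} already supplies their distributional convergence to $f$ itself. On $\T^2$ nothing extra is needed.
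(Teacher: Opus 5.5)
Your proposal is correct and follows the paper's proof: sum the block norms via the geometric series $\sum_{j\ge-1}2^{-j/3}$ to get $L^3$ convergence of the partial sums $S_Q^vf$, then identify the limit with $f$ using the $\mathscr S'$ convergence in \eqref{eq:approxSprime}. Your extra justification that $L^3$ convergence implies $\mathscr S'$ convergence, and your remark that the inhomogeneous block $\Delta_{-1}^v$ rules out any polynomial ambiguity, are details the paper leaves implicit.
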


\begin{proof}
By \eqref{eq:besov} the series $\sum_{j\ge-1}\|\Delta_j^vf\|_{L^3}$ is bounded by
$\|f\|_{B^{1/3,v}_{3,\infty}}\sum_{j\ge-1}2^{-j/3}$, so the partial sums $S_Q^vf$ converge
in $L^3(\Omega)$, and by \eqref{eq:approxSprime} their limit is $f$.
\end{proof}

For $f\in B^{1/3,v}_{3,\infty}(\Omega)$ and $j\ge-1$, write
\[
\epsilon_j(f) := 2^{j/3}\|\Delta_j^v f\|_{L^3},
\]
so that $\sup_{j\ge-1}\epsilon_j(f)=\|f\|_{B^{1/3,v}_{3,\infty}}$ by \eqref{eq:besov}, and
$f\in B^{1/3,v}_{3,c_0}(\Omega)$ precisely when $\epsilon_j(f)\to0$ by \eqref{eq:czero}.
The vertical commutator estimate is stated in terms of the following quantities.

\begin{definition}\label{def:dQ}
For $f\in B^{1/3,v}_{3,\infty}(\Omega)$, which lies in $L^3(\Omega)$ by
Lemma~\ref{lem:embedL3}, and $Q\ge-1$ set
\begin{equation}
\tau_Q(f) := 2^{Q/3}\|f - S_Q^v f\|_{L^3},
\label{eq:tauQ}
\end{equation}
\begin{equation}
T_Q(f) := \left(\int_{\Omega_2} |K_Q(z)|
\bigl(2^{Q/3}\|\delta_z^v f\|_{L^3}\bigr)^2\,dz\right)^{1/2},
\label{eq:TQ}
\end{equation}
\begin{equation}
d_Q(f) := \tau_Q(f) + T_Q(f).
\label{eq:dQ}
\end{equation}
\end{definition}

\begin{lemma}\label{lem:kernelmoment}
For every real $p\ge0$ set
\begin{equation}
\kappa_p:=2^{-p}\int_\R|\check\chi(y)|\,|y|^p\,dy .
\label{eq:kappap}
\end{equation}
Then $\kappa_p<\infty$, and for every $Q\ge-1$,
\begin{equation}
\int_{\Omega_2}|K_Q(z)|\,|z|^p\,dz \;\le\; \kappa_p\,2^{-pQ}
\label{eq:kernelmoment}
\end{equation}
on $\Omega_2=\T$ and on $\Omega_2=\R$.
\end{lemma}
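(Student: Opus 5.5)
First, $\kappa_p<\infty$ because $\check\chi$ is a Schwartz function on $\R$. Indeed, $|\check\chi(y)|\,|y|^p\lesssim_p(1+|y|)^{-2}$, which is integrable for every real $p\ge0$. The proof then splits into the two cases $\Omega_2=\R$ and $\Omega_2=\T$. In the first case the estimate is an exact identity, and in the second it follows from the first by periodization.

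For $\Omega_2=\R$ we have $K_Q=\gamma_Q$ by \eqref{eq:kernels}. Substituting $y=2^{Q+1}z$ in \eqref{eq:linekernel} gives
\[
\int_\R|\gamma_Q(z)|\,|z|^p\,dz
=\int_\R 2^{Q+1}|\check\chi(2^{Q+1}z)|\,|z|^p\,dz
=2^{-p(Q+1)}\int_\R|\check\chi(y)|\,|y|^p\,dy
=\kappa_p\,2^{-pQ}.
\]
This is \eqref{eq:kernelmoment}, with equality.

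For $\Omega_2=\T$ we identify $\T$ with $(-\pi,\pi]$, as in the text preceding Definition~\ref{def:comm}. Since $|z|$ is the absolute value of the representative of $z$ in that interval, we can bound the kernel pointwise by
\[
|K_Q(z)|\,|z|^p\le\sum_{k\in\Z}|\gamma_Q(z+2\pi k)|\,|z|^p\le\sum_{k\in\Z}|\gamma_Q(z+2\pi k)|\,|z+2\pi k|^p .
\]
The second inequality uses $|z|\le|z+2\pi k|$ together with $p\ge0$. We integrate over $z\in(-\pi,\pi]$ and apply Tonelli's theorem, which is permitted because every term is nonnegative. The translates $(-\pi,\pi]+2\pi k$ tile $\R$, so the integrated sum equals $\int_\R|\gamma_Q(y)|\,|y|^p\,dy$. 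By the line computation above, this is $\kappa_p2^{-pQ}$.

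No step poses a real obstacle. The only point needing care is the torus case, where one must move from the periodized kernel back to the line kernel. This costs nothing because of the inequality $|z|\le|z+2\pi k|$, and Tonelli's theorem, with all terms nonnegative, justifies exchanging the sum and the integral.
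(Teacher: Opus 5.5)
Your proof is correct and follows essentially the same route as the paper. On $\R$ you use the exact scaling identity. On $\T$ you bound $|K_Q|\le\sum_k|\gamma_Q(\cdot+2\pi k)|$ and use $|z|\le|z+2\pi k|$ (the paper's $|y-2\pi k|\le|y|$) to unfold the periodized integral back to $\int_\R|\gamma_Q(y)|\,|y|^p\,dy$; you also spell out the Schwartz-decay argument for $\kappa_p<\infty$ and the Tonelli justification, which the paper leaves implicit.
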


\begin{proof}
On $\Omega_2=\R$, the
change of variables $y=2^{Q+1}z$ shows that
\begin{equation*}
\int_\R|\gamma_Q(z)|\,|z|^p\,dz=\kappa_p\,2^{-pQ}.
\end{equation*}
On $\Omega_2=\T$, the periodization of $\gamma_Q$ and a similar change of variables imply
\begin{equation*}
\begin{aligned}
\int_{-\pi}^{\pi}|K_Q(z)|\,|z|^p\,dz
&\le \sum_{k\in\Z}\int_{-\pi}^{\pi}|\gamma_Q(z+2\pi k)|\,|z|^p\,dz\\
&=\sum_{k\in\Z}\int_{2\pi k-\pi}^{2\pi k+\pi}
|\gamma_Q(y)|\,|y-2\pi k|^p\,dy\\
&\le\int_\R|\gamma_Q(y)|\,|y|^p\,dy
=\kappa_p\,2^{-pQ},
\end{aligned}
\end{equation*}
where the last inequality uses $|y-2\pi k|\le|y|$ on
$[2\pi k-\pi,2\pi k+\pi]$.
\end{proof}

\begin{lemma}\label{lem:increment}
Let $g\in B^{1/3,v}_{3,\infty}(\Omega)$ and let $J\ge-1$ be an integer. Then:
\begin{enumerate}
\item[\textup{(a)}] for every $z\in\Omega_2$,
\begin{equation}
\|\delta_z^v S_J^vg\|_{L^3(\Omega)} \;\lesssim\; 2^{2J/3}\,
\|g\|_{B^{1/3,v}_{3,\infty}}\,|z| .
\label{eq:incrementlow}
\end{equation}
\item[\textup{(b)}] for every $z\in\Omega_2\setminus\{0\}$,
\begin{equation}
\|\delta_z^v (g-S_J^vg)\|_{L^3(\Omega)} \;\lesssim\;
\Bigl(\sup_{j>J}\epsilon_j(g)\Bigr)\,|z|^{1/3}.
\label{eq:incrementtail}
\end{equation}
\end{enumerate}
\end{lemma}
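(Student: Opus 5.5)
Both parts reduce to the dyadic decomposition $g=\sum_{j\ge-1}\Delta_j^vg$, which converges in $L^3(\Omega)$ by the argument of Lemma~\ref{lem:embedL3}. Each block is then estimated in one of two ways. The first is a Bernstein (mean value) bound in the vertical variable. The second is the trivial bound $\|\delta_z^v h\|_{L^3}\le 2\|h\|_{L^3}$, which holds because vertical translation is an isometry of $L^3(\Omega)$. For a block localized at vertical frequency $2^j$, the mean value theorem in $x_2$ and Minkowski's inequality give $\|\delta_z^v h\|_{L^3}\le|z|\,\|\partial_2 h\|_{L^3}$. Bernstein's inequality, applied in $x_2$ for a.e.\ $x_1$ and then integrated as described at the start of this section, gives $\|\partial_2\Delta_j^vg\|_{L^3}\lesssim 2^j\|\Delta_j^vg\|_{L^3}$. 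Writing $\|\Delta_j^v g\|_{L^3}=2^{-j/3}\epsilon_j(g)$, we obtain
\[
\|\delta_z^v\Delta_j^vg\|_{L^3}\lesssim \epsilon_j(g)\min\bigl(2^{2j/3}|z|,\,2^{-j/3}\bigr).
\]
The block $j=-1$ is supported in $|\xi|<4/3$ and is handled by the same Bernstein bound. On $\T$ we take $z\in(-\pi,\pi]$, and the periodic Bernstein inequality applies in the same way.

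For (a), the finite sum $S_J^vg=\sum_{j\le J}\Delta_j^vg$ is used with the first bound only. This gives
\[
\|\delta_z^vS_J^vg\|_{L^3}\lesssim |z|\,\|g\|_{B^{1/3,v}_{3,\infty}}\sum_{j=-1}^{J}2^{2j/3}\lesssim 2^{2J/3}\|g\|_{B^{1/3,v}_{3,\infty}}|z|,
\]
since the geometric series is dominated by its last term.

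For (b), write $g-S_J^vg=\sum_{j>J}\Delta_j^vg$, with the series converging in $L^3$, so that $\delta_z^v$ may be applied termwise. Set $\epsilon:=\sup_{j>J}\epsilon_j(g)$. Choose the integer $j_0$ with $2^{-j_0-1}<|z|\le 2^{-j_0}$. For $J<j\le j_0$ we use the mean value bound, and for $j>\max(J,j_0)$ we use the trivial bound. The sum is then at most
\[
\epsilon\Bigl(|z|\sum_{j\le j_0}2^{2j/3}+\sum_{j>j_0}2^{-j/3}\Bigr)\lesssim \epsilon\bigl(|z|\,2^{2j_0/3}+2^{-j_0/3}\bigr)\lesssim\epsilon|z|^{1/3}.
\]
If $j_0\le J$, only the second regime occurs, and the tail $\sum_{j>J}2^{-j/3}\le\sum_{j>j_0}2^{-j/3}$ gives the same bound. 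Here $z\ne0$ is needed so that $j_0$ is defined. On $\T$ we have $|z|\le\pi$, so $j_0\ge-2$ and the bookkeeping is unchanged.

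The only delicate point is justifying the Bernstein-type bound $\|\partial_2\Delta_j^vg\|_{L^3}\lesssim2^j\|\Delta_j^vg\|_{L^3}$ for functions that are frequency-localized in $x_2$ alone, uniformly in $x_1$ and on both domains. Everything else is geometric summation. To justify it, write $\Delta_j^vg=\tilde\varphi(2^{-j}D_2)\Delta_j^vg$ with a fattened cutoff $\tilde\varphi$. Then $\partial_2\Delta_j^vg$ is a convolution in $x_2$ against $2^j$ times an $L^1$-normalized kernel, which is periodized on $\T$ as in Definition~\ref{def:kernels}. Young's inequality in $x_2$, followed by integration in $x_1$, gives the bound.
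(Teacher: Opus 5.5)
Your proposal is correct and follows the paper's proof essentially step for step: the blockwise bound $\|\delta_z^v\Delta_j^vg\|_{L^3}\lesssim\min\{1,2^j|z|\}\,\epsilon_j(g)\,2^{-j/3}$ from Bernstein and the mean value theorem, a geometric sum for (a), and for (b) a split at $2^{-j_0-1}<|z|\le 2^{-j_0}$, which is exactly how the paper proves its dyadic summation inequality. The only differences are that you carry out that summation inline rather than as a separate estimate, and you justify the vertical Bernstein inequality with a fattened cutoff where the paper cites it as standard.
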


\begin{proof}
The vertical Bernstein inequality reads
\[
\|\partial_2\Delta_j^vg\|_{L^3}
\lesssim 2^j\|\Delta_j^vg\|_{L^3},
\qquad j\ge-1.
\]
Together with translation invariance and the fundamental theorem of calculus, it yields
\begin{equation}
\|\delta_z^v\Delta_j^vg\|_{L^3}
\lesssim\min\{1,2^j|z|\}\,\epsilon_j(g)\,2^{-j/3},
\qquad j\ge-1.
\label{eq:incrementcrude}
\end{equation}

Next, we show that, for every $r>0$,
\begin{equation}
\sum_{j\ge-1}\min\{1,2^jr\}\,2^{-j/3}\lesssim r^{1/3}.
\label{eq:dyadicsum}
\end{equation}
For $0<r<1$, choose $j_r\ge0$ such that $2^{j_r}r\le1<2^{j_r+1}r$. Then
\begin{align*}
\sum_{j\ge-1}\min\{1,2^jr\}\,2^{-j/3}
&=r\sum_{j=-1}^{j_r}2^{2j/3}+\sum_{j>j_r}2^{-j/3}\\
&\lesssim r2^{2j_r/3}+2^{-j_r/3}
\lesssim r^{1/3}.
\intertext{For $r\ge1$, we have}
\sum_{j\ge-1}\min\{1,2^jr\}\,2^{-j/3}
&\le\sum_{j\ge-1}2^{-j/3}\lesssim1\lesssim r^{1/3}.
\end{align*}

For \textup{(a)}, sum \eqref{eq:incrementcrude} over $-1\le j\le J$, using
$\min\{1,2^j|z|\}\le2^j|z|$, $\epsilon_j(g)\le\|g\|_{B^{1/3,v}_{3,\infty}}$ and
$\sum_{j=-1}^J2^{2j/3}\lesssim2^{2J/3}$.

For \textup{(b)}, since $g\in L^3(\Omega)$ by \eqref{eq:embedL3}, the partial sums
$S_Q^vg$ converge to $g$ in $L^3(\Omega)$ by \eqref{eq:approxid}, so that
$g-S_J^vg=\sum_{j>J}\Delta_j^vg$ in $L^3(\Omega)$. Sum \eqref{eq:incrementcrude} over
$j>J$, factor out $\sup_{j>J}\epsilon_j(g)$ and apply \eqref{eq:dyadicsum} with $r=|z|$.
\end{proof}

\begin{lemma}\label{lem:dQprops}
Let $f\in B^{1/3,v}_{3,\infty}(\Omega)$. Then:
\begin{enumerate}
\item[\textup{(a)}] for every $Q\ge-1$,
\begin{equation}
d_Q(f)\;\lesssim\;2^{-Q/3}\|f\|_{B^{1/3,v}_{3,\infty}}+\sup_{j>Q/2}\epsilon_j(f),
\label{eq:dQquant}
\end{equation}
and in particular
\begin{equation}
\sup_{Q\ge-1}d_Q(f) \;\lesssim\; \|f\|_{B^{1/3,v}_{3,\infty}}.
\label{eq:dQbound}
\end{equation}
\item[\textup{(b)}] If $f\in B^{1/3,v}_{3,c_0}(\Omega)$, then
\begin{equation}
\lim_{Q\to\infty}d_Q(f)=0 .
\label{eq:dQiff}
\end{equation}
\end{enumerate}
\end{lemma}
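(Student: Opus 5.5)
We bound the two pieces $\tau_Q(f)$ and $T_Q(f)$ of $d_Q(f)$ separately, splitting $f$ at the intermediate level $J:=\lfloor Q/2\rfloor$. Write $f=S_J^vf+(f-S_J^vf)$, so that the low part is controlled by Lemma~\ref{lem:increment}(a) and the high part by Lemma~\ref{lem:increment}(b), with $\sup_{j>J}\epsilon_j(f)\le\sup_{j>Q/2}\epsilon_j(f)$ up to the harmless shift at the boundary index. Note that $J\ge-1$ for every $Q\ge-1$.

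\textbf{The term $\tau_Q$.} Use the representation \eqref{eq:approxdiff}, $f-S_Q^vf=\int K_Q(z)\,\delta_z^vf\,dz$, and Minkowski's inequality to get
\[
\tau_Q(f)\le 2^{Q/3}\int_{\Omega_2}|K_Q(z)|\,\|\delta_z^vf\|_{L^3}\,dz .
\]
Here split $\|\delta_z^vf\|_{L^3}\le\|\delta_z^vS_J^vf\|_{L^3}+\|\delta_z^v(f-S_J^vf)\|_{L^3}$. For the first piece, \eqref{eq:incrementlow} combined with the moment bound \eqref{eq:kernelmoment} at $p=1$ gives
\[
2^{Q/3}\,2^{2J/3}\,2^{-Q}\,\|f\|_{B^{1/3,v}_{3,\infty}}\lesssim 2^{-Q/3}\|f\|_{B^{1/3,v}_{3,\infty}},
\]
since $2J\le Q$. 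For the second piece, \eqref{eq:incrementtail} and \eqref{eq:kernelmoment} at $p=1/3$ give $2^{Q/3}\kappa_{1/3}2^{-Q/3}\sup_{j>J}\epsilon_j(f)$.

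\textbf{The term $T_Q$.} Use the same splitting, now inside the square: $(a+b)^2\le2a^2+2b^2$. The low part contributes
\[
2^{2Q/3}\,2^{4J/3}\,\|f\|^2_{B^{1/3,v}_{3,\infty}}\int|K_Q||z|^2\lesssim 2^{2Q/3}\,2^{2Q/3}\,2^{-2Q}\,\|f\|^2_{B^{1/3,v}_{3,\infty}}=2^{-2Q/3}\|f\|^2_{B^{1/3,v}_{3,\infty}},
\]
using $p=2$ in \eqref{eq:kernelmoment}. The high part contributes $2^{2Q/3}\bigl(\sup_{j>J}\epsilon_j(f)\bigr)^2\kappa_{2/3}2^{-2Q/3}$. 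Taking square roots yields \eqref{eq:dQquant}.

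\textbf{The consequences.} Estimate \eqref{eq:dQbound} follows from \eqref{eq:dQquant} because $\epsilon_j(f)\le\|f\|_{B^{1/3,v}_{3,\infty}}$ and $2^{-Q/3}\le 2^{1/3}$. Part (b) is immediate: if $f\in B^{1/3,v}_{3,c_0}$, then $\epsilon_j(f)\to0$, so $\sup_{j>Q/2}\epsilon_j(f)\to0$ as $Q\to\infty$, and the first term of \eqref{eq:dQquant} also tends to zero.

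\textbf{Main obstacle.} The only delicate point is the choice of the splitting level $J$. It must satisfy $2^{2J/3}2^{-2Q/3}\to0$, so that the low part decays, while $J\to\infty$, so that the tail supremum tends to zero in the $c_0$ case. The choice $J\approx Q/2$ does both. If $J$ is taken as the integer part of $Q/2$, the supremum runs over $j>J$, which may include $j=\lceil Q/2\rceil$ or one index below $Q/2$. This mismatch is absorbed by bounding that single term by its own $\epsilon_j$, or by reading ``$j>Q/2$'' up to a shift of one index; neither affects (a) or (b).
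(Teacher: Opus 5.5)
Your proof is correct and follows a genuinely different route from the paper only for $\tau_Q$. For $T_Q$ it matches the paper: the same level $J=\lfloor Q/2\rfloor$, Lemma~\ref{lem:increment}(a) and (b), and the moments \eqref{eq:kernelmoment} at $p=2$ and $p=2/3$. Your use of $(a+b)^2\le 2a^2+2b^2$ pointwise in $z$, in place of the paper's subadditivity of $T_Q$, only changes constants.

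The paper never uses increments for $\tau_Q$. It writes $f-S_Q^vf=\sum_{j>Q}\Delta_j^vf$ and sums a geometric series to get $\tau_Q(f)\lesssim\sup_{j>Q}\epsilon_j(f)$. That is shorter and sharper: there is no $2^{-Q/3}\|f\|_{B^{1/3,v}_{3,\infty}}$ term, and the tail starts at $Q$ rather than $Q/2$.

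Your route through \eqref{eq:approxdiff}, with the moments at $p=1$ and $p=1/3$, is also valid. (Minor sign slip: \eqref{eq:approxdiff} gives $S_Q^vf-f=\int K_Q\,\delta_z^vf\,dz$, which does not matter for norms.) It also has a by-product you did not notice. Applying Cauchy--Schwarz with respect to $|K_Q(z)|\,dz$ to your first display, together with \eqref{eq:kernelL1}, gives $\tau_Q(f)\le\|K_Q\|_{L^1}^{1/2}T_Q(f)\lesssim T_Q(f)$. So the $\tau_Q$ bound follows from the $T_Q$ bound, and no separate splitting is needed.

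The index mismatch you worry about at the end does not occur. For integers $j$, $j>\lfloor Q/2\rfloor$ holds exactly when $j>Q/2$, including $Q=-1$, as the paper notes. Hence $\sup_{j>J}\epsilon_j(f)=\sup_{j>Q/2}\epsilon_j(f)$ with no shift. This matters, because your first proposed fix would not work if an extra index existed: bounding an index $j\le Q/2$ by its own $\epsilon_j$ produces a term that the right-hand side of \eqref{eq:dQquant} does not control.
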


\begin{proof}
Fix $Q\ge-1$. The identity $f-S_Q^vf=\sum_{j>Q}\Delta_j^vf$ and a geometric series
estimate imply
\begin{equation}
\tau_Q(f)
\;\le\;2^{Q/3}\sum_{j>Q}\epsilon_j(f)\,2^{-j/3}
\;\lesssim\;\sup_{j>Q}\epsilon_j(f).
\label{eq:tailL3}
\end{equation}
For $T_Q$, let $J$ be the largest integer with $2J\le Q$, so that $J\ge-1$,
$2^{4J/3}\le2^{2Q/3}$, and $j>J$ exactly when $j>Q/2$. Split $f=S_J^vf+(f-S_J^vf)$. Since $\delta_z^v$ is linear,
the triangle inequalities in $L^3(\Omega)$ and in $L^2(\Omega_2,|K_Q(z)|\,dz)$ show that
$T_Q$ is subadditive, so
\begin{equation}
T_Q(f)\le T_Q(S_J^vf)+T_Q(f-S_J^vf).
\label{eq:TQsubadd}
\end{equation}
Inserting \eqref{eq:incrementlow} into the definition \eqref{eq:TQ} and applying
Lemma~\ref{lem:kernelmoment} at $p=2$,
\begin{equation}
\begin{split}
T_Q(S_J^vf)^2
&\lesssim 2^{2Q/3}\,2^{4J/3}\|f\|_{B^{1/3,v}_{3,\infty}}^2
\int_{\Omega_2}|K_Q(z)|\,|z|^2\,dz\\
&\le \kappa_2\,2^{4J/3}\|f\|_{B^{1/3,v}_{3,\infty}}^2\,2^{-4Q/3}
\;\le\;\kappa_2\,\|f\|_{B^{1/3,v}_{3,\infty}}^2\,2^{-2Q/3},
\end{split}
\label{eq:TQlow}
\end{equation}
while \eqref{eq:incrementtail} and Lemma~\ref{lem:kernelmoment} at $p=2/3$ yield
\begin{equation}
\begin{split}
T_Q(f-S_J^vf)^2
&\lesssim 2^{2Q/3}\Bigl(\sup_{j>J}\epsilon_j(f)\Bigr)^2
\int_{\Omega_2}|K_Q(z)|\,|z|^{2/3}\,dz\\
&\le \kappa_{2/3}\Bigl(\sup_{j>J}\epsilon_j(f)\Bigr)^2 .
\end{split}
\label{eq:TQtail}
\end{equation}
Since $\kappa_2$ and $\kappa_{2/3}$ are universal constants, taking square roots in
\eqref{eq:TQlow} and \eqref{eq:TQtail} and inserting them into \eqref{eq:TQsubadd}, we
obtain
\begin{equation}
T_Q(f)\;\lesssim\;2^{-Q/3}\|f\|_{B^{1/3,v}_{3,\infty}}
\;+\;\sup_{j>Q/2}\epsilon_j(f).
\label{eq:TQquant}
\end{equation}
Combining \eqref{eq:tailL3}, \eqref{eq:TQquant} and \eqref{eq:dQ} yields
\eqref{eq:dQquant}. Since $2^{-Q/3}\le2^{1/3}$ for $Q\ge-1$ and
$\epsilon_j(f)\le\|f\|_{B^{1/3,v}_{3,\infty}}$ for every $j$, \eqref{eq:dQquant} implies
\eqref{eq:dQbound}. If $f\in B^{1/3,v}_{3,c_0}(\Omega)$, then $\epsilon_j(f)\to0$ as
$j\to\infty$ by \eqref{eq:czero}, so both terms on the right of \eqref{eq:dQquant} tend
to $0$ as $Q\to\infty$, which proves \eqref{eq:dQiff}.
\end{proof}

\begin{lemma}\label{lem:comm}
Let $f\in B^{1/3,v}_{3,\infty}(\Omega)$. Then, for every integer $Q\ge-1$,
\begin{equation}
\|R_Q^v(f)\|_{L^{3/2}(\Omega)} \;\le\; 2^{-2Q/3}\, d_Q(f)^2,
\label{eq:RQthreehalf}
\end{equation}
\begin{equation}
\|\partial_2 S_Q^v f\|_{L^3(\Omega)} \;\lesssim\; 2^{2Q/3}\,\|f\|_{B^{1/3,v}_{3,\infty}}.
\label{eq:vertderiv}
\end{equation}
Consequently,
\begin{equation}
\|R_Q^v(f)\|_{L^{3/2}(\Omega)}\,\|\partial_2 S_Q^v f\|_{L^3(\Omega)}
\;\lesssim\; d_Q(f)^2\,\|f\|_{B^{1/3,v}_{3,\infty}}
\label{eq:commproduct}
\end{equation}
uniformly in $Q$.
\end{lemma}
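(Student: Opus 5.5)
The plan is to use the Constantin--E--Titi identity, written in the vertical variable alone. Because $\int_{\Omega_2}K_Q(z)\,dz=1$ by \eqref{eq:norm}, one has pointwise
\[
R_Q^v(f)(x)=\int_{\Omega_2}K_Q(z)\,\delta_z^vf(x)\otimes\delta_z^vf(x)\,dz
-\bigl(S_Q^vf-f\bigr)(x)\otimes\bigl(S_Q^vf-f\bigr)(x).
\]
To verify this, expand $\int K_Q(z)(f(x_1,x_2-z)-f(x))\otimes(f(x_1,x_2-z)-f(x))\,dz$ using the convolution form \eqref{eq:convform}. This gives $S_Q^v(f\otimes f)-S_Q^vf\otimes f-f\otimes S_Q^vf+f\otimes f$. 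Subtracting $(S_Q^vf-f)\otimes(S_Q^vf-f)$ leaves exactly $S_Q^v(f\otimes f)-S_Q^vf\otimes S_Q^vf$.

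Next, take $L^{3/2}$ norms. For the second term, Hölder gives $\|S_Q^vf-f\|_{L^3}^2=2^{-2Q/3}\tau_Q(f)^2$. For the first term, apply Minkowski's integral inequality and then Hölder in $x$:
\[
\Bigl\|\int K_Q\,\delta_z^vf\otimes\delta_z^vf\,dz\Bigr\|_{L^{3/2}}
\le\int|K_Q(z)|\,\|\delta_z^vf\|_{L^3}^2\,dz=2^{-2Q/3}T_Q(f)^2 .
\]
Adding the two pieces gives $2^{-2Q/3}(\tau_Q^2+T_Q^2)\le2^{-2Q/3}d_Q(f)^2$, which is \eqref{eq:RQthreehalf} with constant exactly $1$. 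The one technical point is that the kernel integral must converge in $L^{3/2}$. This holds because $\|\delta_z^vf\|_{L^3}\le2\|f\|_{L^3}$, $f\in L^3$ by Lemma~\ref{lem:embedL3}, and $K_Q\in L^1$ by \eqref{eq:kernelL1}. The pointwise identity is then valid for a.e.\ $x$ by Fubini.

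For \eqref{eq:vertderiv}, write $\partial_2S_Q^vf=\sum_{j=-1}^{Q}\partial_2\Delta_j^vf$. The vertical Bernstein inequality (as in the proof of Lemma~\ref{lem:increment}) bounds each summand by $2^j\|\Delta_j^vf\|_{L^3}=2^{2j/3}\epsilon_j(f)$. Summing the geometric series gives the bound $\lesssim2^{2Q/3}\|f\|_{B^{1/3,v}_{3,\infty}}$. Alternatively, one can divide \eqref{eq:incrementlow} with $J=Q$ by $|z|$ and let $z\to0$.

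Multiplying the two bounds, the powers $2^{-2Q/3}$ and $2^{2Q/3}$ cancel, which yields \eqref{eq:commproduct} uniformly in $Q$. The only step requiring care is the justification of the commutator identity, and it is routine; there is no real obstacle.
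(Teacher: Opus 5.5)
Your proposal is correct and follows the paper's proof essentially step for step. Both arguments use the same one-dimensional Constantin--E--Titi identity for $R_Q^v(f)$, then Minkowski and H\"older in $L^{3/2}$ to get $2^{-2Q/3}\bigl(T_Q(f)^2+\tau_Q(f)^2\bigr)\le 2^{-2Q/3}d_Q(f)^2$, and then the vertical Bernstein inequality summed over $-1\le j\le Q$ for \eqref{eq:vertderiv}.
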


\begin{proof}
By \eqref{eq:embedL3}, $f\in L^3(\Omega)$, so that $f\otimes f\in L^{3/2}(\Omega)$.
Using the convolution formula \eqref{eq:convform} and \eqref{eq:norm},
we can rewrite \eqref{eq:RQv} in $L^{3/2}(\Omega)$ as
\begin{equation}
R_Q^v(f)
= \int_{\Omega_2}K_Q(z)\,\delta_z^vf\otimes\delta_z^vf\,dz
-(f-S_Q^vf)\otimes(f-S_Q^vf).
\label{eq:commid}
\end{equation}
Applying to \eqref{eq:commid} the triangle inequality, Minkowski's integral
inequality in $L^{3/2}(\Omega)$, and the identity
$\|g\otimes g\|_{L^{3/2}}=\|g\|_{L^3}^2$, we obtain
\[
\|R_Q^v(f)\|_{L^{3/2}}
\le \int_{\Omega_2}|K_Q(z)|\,\|\delta_z^vf\|_{L^3}^2\,dz
+\|f-S_Q^vf\|_{L^3}^2.
\]
Multiplying by $2^{2Q/3}$ turns the two terms on the right into $T_Q(f)^2$ and
$\tau_Q(f)^2$ respectively, by \eqref{eq:tauQ} and \eqref{eq:TQ}, so that
$s^2+t^2\le(s+t)^2$ for $s,t\ge0$ and \eqref{eq:dQ} imply
\[
2^{2Q/3}\|R_Q^v(f)\|_{L^{3/2}}
\le T_Q(f)^2+\tau_Q(f)^2
\le d_Q(f)^2,
\]
which proves \eqref{eq:RQthreehalf}.

To prove \eqref{eq:vertderiv}, write $S_Q^vf=\sum_{-1\le q\le Q}\Delta_q^vf$.
By Bernstein's inequality \cite[Lemma~2.1]{BCD} and the definition of $\epsilon_q(f)$,
\[
\|\partial_2\Delta_q^vf\|_{L^3(\Omega)}
\;\lesssim\; 2^{q}\,\|\Delta_q^vf\|_{L^3(\Omega)}
\;=\; 2^{2q/3}\,\epsilon_q(f),
\qquad -1\le q\le Q .
\]
Summing over $-1\le q\le Q$ and using \eqref{eq:besov}, we obtain
\[
\|\partial_2 S_Q^v f\|_{L^3}
\lesssim \|f\|_{B^{1/3,v}_{3,\infty}}\sum_{q=-1}^{Q}2^{2q/3}
\lesssim 2^{2Q/3}\|f\|_{B^{1/3,v}_{3,\infty}},
\]
which proves \eqref{eq:vertderiv}.
Multiplying \eqref{eq:RQthreehalf} by \eqref{eq:vertderiv} gives \eqref{eq:commproduct}.
\end{proof}

\section{Horizontal dissipation, isotropic regularity, and the energy equality}\label{sec:apriori-energy}

\subsection{The horizontal dissipation estimate}\label{subsec:apriori-estimate}

Throughout this subsection, $u$ is a weak solution in the sense of
Definition~\ref{def:weak} satisfying \eqref{eq:hypL4} and \eqref{eq:besovweak}. The
$c_0$ refinement \eqref{eq:besovhyp} is used only in
Subsection~\ref{subsec:isotropic-energy}. Set
\[
u_Q := S_Q^v u,\qquad u_{N,Q} := S_N^h S_Q^v u = S_Q^v S_N^h u.
\]
Define $R_{N,Q}$ and $\Pi_{N,Q}$ by
\begin{equation}
\begin{split}
R_{N,Q}(u) &:= S_N^h S_Q^v (u\otimes u) - u_{N,Q}\otimes u_{N,Q},\\
\Pi_{N,Q} &:= \int_\Omega R_{N,Q}(u):\nabla u_{N,Q}\,dx .
\end{split}
\label{eq:PiNQ}
\end{equation}
By Lemma~\ref{lem:balance} in Appendix~\ref{app:A}, we obtain the \emph{doubly truncated
energy balance} for all
$N,Q\ge-1$ and every $t\in[0,T]$:
\begin{equation}
\frac12\|u_{N,Q}(t)\|_{L^2}^2 + \nu\int_0^t \|\partial_1 u_{N,Q}(s)\|_{L^2}^2\,ds
= \frac12\|u_{N,Q}(0)\|_{L^2}^2 + \int_0^t \Pi_{N,Q}(s)\,ds.
\label{eq:balanceNQ}
\end{equation}
The commutator can be split into a horizontal and a vertical part.

\begin{lemma}\label{lem:splitcomm}
Let $f\in L^4(\Omega;\R^2)$ and let $N,Q\ge-1$. Then
\begin{equation}
S_N^hS_Q^v(f\otimes f)-S_N^hS_Q^vf\otimes S_N^hS_Q^vf
= S_N^h R_Q^v(f) + R_N^h(S_Q^vf),
\label{eq:splitcomm}
\end{equation}
and, uniformly in $N$ and $Q$,
\begin{equation}
\bigl\|S_N^hS_Q^v(f\otimes f)-S_N^hS_Q^vf\otimes S_N^hS_Q^vf\bigr\|_{L^2}
\;\lesssim\; \|f\|_{L^4}^2 .
\label{eq:RNQL2}
\end{equation}
\end{lemma}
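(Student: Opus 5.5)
The identity \eqref{eq:splitcomm} is purely algebraic, and I will verify it by adding and subtracting one intermediate term. The natural intermediate object is $S_N^h\bigl(S_Q^vf\otimes S_Q^vf\bigr)$. Since $f\in L^4$, the tensor $f\otimes f\in L^2$, and $S_Q^vf\in L^4$ by the uniform $L^p$ bounds of Lemma~\ref{lem:kernelbasic}. Hence every term below is a well-defined $L^2$ function, and linearity of $S_N^h$ may be used freely. I write
\[
S_N^hS_Q^v(f\otimes f)-S_N^hS_Q^vf\otimes S_N^hS_Q^vf
= S_N^h\bigl[S_Q^v(f\otimes f)-S_Q^vf\otimes S_Q^vf\bigr]
+\bigl[S_N^h(S_Q^vf\otimes S_Q^vf)-S_N^hS_Q^vf\otimes S_N^hS_Q^vf\bigr].
\]
By \eqref{eq:RQv}, the first bracket is $R_Q^v(f)$, so the first term is $S_N^hR_Q^v(f)$. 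The second bracket is exactly $R_N^h(g)$ with $g=S_Q^vf\in L^4$. For the left-hand side to match, I use that $S_N^h$ and $S_Q^v$ commute, which holds because they act in different variables (as noted at the start of Section~\ref{sec:commutator}). This gives $S_N^hS_Q^vf=S_Q^vS_N^hf$, so no ordering issue arises, and \eqref{eq:splitcomm} follows.

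For \eqref{eq:RNQL2} I will apply Lemma~\ref{lem:L2comm} with $p=4$ to each piece, so that $p/2=2$. For the first piece, $\|S_N^hR_Q^v(f)\|_{L^2}\lesssim\|R_Q^v(f)\|_{L^2}\lesssim\|f\|_{L^4}^2$. This uses the uniform $L^2$ boundedness of $S_N^h$ (operator norm at most $1$) together with \eqref{eq:L2commbound} for the vertical truncation. For the second piece, \eqref{eq:L2commbound} applied with $S_M=S_N^h$ and $f$ replaced by $S_Q^vf$ gives $\|R_N^h(S_Q^vf)\|_{L^2}\lesssim\|S_Q^vf\|_{L^4}^2\lesssim\|f\|_{L^4}^2$, by the uniform $L^4$ bound of $S_Q^v$ from Lemma~\ref{lem:kernelbasic}. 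All implied constants come from $\|\check\chi\|_{L^1}$ and are therefore uniform in $N$ and $Q$.

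There is no real obstacle here. The only point needing care is justifying the commutation of the two truncations and the linearity manipulations on $L^2$ (rather than merely $\mathscr S'$) functions. On both domains this follows from the convolution representation \eqref{eq:convform} and Fubini's theorem, since the kernels are integrable in separate variables.
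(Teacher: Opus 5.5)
Your proof is correct and essentially the paper's: the identity comes from the same add-and-subtract of $S_N^h(S_Q^vf\otimes S_Q^vf)$. The commutation of $S_N^h$ and $S_Q^v$ is not actually needed, since $S_N^hS_Q^vf$ on the left already means $S_N^h(S_Q^vf)$. The only difference is cosmetic: the paper proves \eqref{eq:RNQL2} by applying the triangle inequality directly to the two terms on the left, using $L^2$-contractivity and uniform $L^4$-boundedness of the truncations, rather than bounding the two pieces of the split via Lemma~\ref{lem:L2comm}; both routes are equally valid.
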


\begin{proof}
Expanding $S_N^hR_Q^v(f)$ and $R_N^h(S_Q^vf)$ by \eqref{eq:RQv}, the two occurrences of
$S_N^h(S_Q^vf\otimes S_Q^vf)$ cancel, which is \eqref{eq:splitcomm}. For
\eqref{eq:RNQL2}, the triangle inequality, the
$L^2$-contractivity of $S_N^h$ and $S_Q^v$, and their uniform $L^4$-boundedness
(Lemma~\ref{lem:kernelbasic}) yield
\[
\|S_N^hS_Q^v(f\otimes f)\|_{L^2}+\|S_N^hS_Q^vf\otimes S_N^hS_Q^vf\|_{L^2}
\le \|f\otimes f\|_{L^2}+\|S_N^hS_Q^vf\|_{L^4}^2
\lesssim \|f\|_{L^4}^2 .
\]
\end{proof}

The following proposition contains the main a priori estimate of the paper and proves part (i) of Theorem~\ref{thm:main}.

\begin{proposition}\label{prop:apriori}
Let $u$ be a weak solution in the sense of Definition~\ref{def:weak} satisfying
\eqref{eq:hypL4} and \eqref{eq:besovweak}. Then
$\partial_1u\in L^2(0,T;L^2(\Omega))$ and \eqref{eq:apriori} holds.
\end{proposition}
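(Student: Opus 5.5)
We start from the doubly truncated balance \eqref{eq:balanceNQ} at $t=T$, drop the nonnegative term $\frac12\|u_{N,Q}(T)\|_{L^2}^2$, and bound $\frac12\|u_{N,Q}(0)\|_{L^2}^2\le\frac12\|u_0\|_{L^2}^2$ by $L^2$-contractivity. This gives
\[
\nu\int_0^T\|\partial_1u_{N,Q}\|_{L^2}^2\,ds\le\tfrac12\|u_0\|_{L^2}^2+\int_0^T\Pi_{N,Q}\,ds .
\]
Using \eqref{eq:splitcomm} with $f=u(s)$, we split $\Pi_{N,Q}=\Pi^h+\Pi^v$, and further write $\nabla u_{N,Q}=(\partial_1u_{N,Q},\partial_2u_{N,Q})$ column-wise. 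Each piece is then either absorbed or estimated uniformly in $N$.

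\emph{Horizontal-derivative contributions.} Every term in which $\nabla u_{N,Q}$ appears through its $\partial_1$-column is bounded by Cauchy--Schwarz, \eqref{eq:RNQL2} and Young:
\[
\Bigl|\int_\Omega R_{N,Q}(u):(\partial_1u_{N,Q}\otimes e_1)\,dx\Bigr|\le C\|u\|_{L^4}^2\|\partial_1u_{N,Q}\|_{L^2}\le\frac\nu2\|\partial_1u_{N,Q}\|_{L^2}^2+\frac{C}{\nu}\|u\|_{L^4}^4 .
\]
This term is absorbed into the left side, producing the $\frac{C}{\nu}\|u\|_{L^4_tL^4_x}^4$ term in \eqref{eq:apriori}.

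\emph{Vertical-derivative contributions.} In the terms with $\partial_2u_{N,Q}=S_N^h\partial_2u_Q$, we use the split. For the vertical piece $S_N^hR_Q^v(u)$, Hölder gives
\[
\|S_N^hR_Q^v(u)\|_{L^{3/2}}\,\|S_N^h\partial_2u_Q\|_{L^3}\lesssim\|R_Q^v(u)\|_{L^{3/2}}\|\partial_2S_Q^vu\|_{L^3}\lesssim d_Q(u)^2\|u\|_{B^{1/3,v}_{3,\infty}}\lesssim\|u\|_{B^{1/3,v}_{3,\infty}}^3 ,
\]
by \eqref{eq:commproduct} and \eqref{eq:dQbound}. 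This is uniform in $N,Q$ and integrable in time by \eqref{eq:besovweak}.

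The horizontal piece $R_N^h(u_Q)$ tested against $\partial_2u_{N,Q}$ is the main obstacle. It is not absorbable, since it involves no $\partial_1$, and it is not controlled uniformly in $Q$, since $\|\partial_2u_{N,Q}\|_{L^3}$ grows like $2^{2Q/3}$. My plan is to handle it by letting $N\to\infty$ first at fixed $Q$. By Lemma~\ref{lem:L2comm} with $p=3$, $R_N^h(u_Q)\to0$ in $L^{3/2}$ for a.e.\ $s$, with the uniform bound $\|u_Q\|_{L^3}^2$. Meanwhile $\|S_N^h\partial_2u_Q\|_{L^3}\lesssim2^{2Q/3}\|u\|_{B^{1/3,v}_{3,\infty}}$. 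By dominated convergence, with dominating function $C2^{2Q/3}\|u\|_{B^{1/3,v}_{3,\infty}}^3\in L^1_t$ (using Lemma~\ref{lem:embedL3}), the time integral of this term tends to $0$ as $N\to\infty$.

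We therefore obtain, for each fixed $Q$,
\[
\frac\nu2\limsup_{N\to\infty}\int_0^T\|\partial_1u_{N,Q}\|_{L^2}^2\,ds\le\tfrac12\|u_0\|_{L^2}^2+\frac C\nu\|u\|_{L^4_tL^4_x}^4+C\int_0^T\|u\|_{B^{1/3,v}_{3,\infty}}^3\,ds .
\]
Since $\partial_1u_{N,Q}=S_N^h\partial_1u_Q\to\partial_1u_Q$ in $\mathscr S'$ and the left side is bounded, weak lower semicontinuity in $L^2((0,T)\times\Omega)$ gives the same bound for $\|\partial_1u_Q\|_{L^2_tL^2_x}$, uniformly in $Q$. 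Letting $Q\to\infty$ and using lower semicontinuity again, with $u_Q\to u$ in distributions, yields $\partial_1u\in L^2_tL^2_x$ and \eqref{eq:apriori} after multiplying by $2$.

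A technical point to verify is the measurability and integrability in time of each term, which follows from $u\in L^4_tL^4_x\cap L^3_tB^{1/3,v}_{3,\infty}$.
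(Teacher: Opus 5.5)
Your proposal is correct and follows the paper's proof essentially step by step. You absorb the $\partial_1$-column of the flux via \eqref{eq:RNQL2} and Young, bound the $S_N^hR_Q^v(u)$ piece against $\partial_2u_{N,Q}$ by \eqref{eq:commproduct} and \eqref{eq:dQbound}, eliminate the $R_N^h(u_Q)$ piece by letting $N\to\infty$ at fixed $Q$, and then pass to the limit by weak lower semicontinuity. The only deviation is cosmetic: for the $R_N^h(u_Q)$ term you pair $L^{3/2}$ with $L^3$, using Lemma~\ref{lem:L2comm} with $p=3$ and \eqref{eq:vertderiv}. The paper instead pairs $L^2$ with $L^2$, using Lemma~\ref{lem:L2comm} with $p=4$ and the Bernstein bound $2^Q\|u\|_{L^\infty_tL^2_x}$. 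Both choices give a valid dominating function.
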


\begin{proof}
\emph{Step 1: the horizontal flux.}
For a matrix $R$, write $R^{\cdot k}:=Re_k$ for its $k$-th column, $(e_1,e_2)$ being the
standard basis of $\R^2$.
Split $\Pi_{N,Q}=\Pi^h_{N,Q}+\Pi^v_{N,Q}$ with
\[
\Pi^h_{N,Q} = \int_\Omega (R_{N,Q}(u))^{\cdot 1}\cdot\partial_1u_{N,Q}\,dx,
\qquad
\Pi^v_{N,Q} = \int_\Omega (R_{N,Q}(u))^{\cdot 2}\cdot\partial_2u_{N,Q}\,dx.
\]
By \eqref{eq:RNQL2}, Cauchy--Schwarz and Young's inequality, for a.e.\ $t$,
\begin{equation}
\begin{split}
|\Pi^h_{N,Q}(t)|
&\le \|R_{N,Q}(u(t))\|_{L^2}\,\|\partial_1u_{N,Q}(t)\|_{L^2}\\
&\le \frac{\nu}{2}\,\|\partial_1u_{N,Q}(t)\|_{L^2}^2
+ \frac{C}{\nu}\,\|u(t)\|_{L^4}^4 .
\end{split}
\label{eq:young}
\end{equation}
By Bernstein's inequality and \eqref{eq:weakcontbound},
$\|\partial_1u_{N,Q}(t)\|_{L^2}\lesssim2^N\|u\|_{L^\infty_tL^2_x}$ for every $t$, so that
$\int_0^T\|\partial_1u_{N,Q}\|_{L^2}^2\,dt$ is finite for each fixed pair $(N,Q)$. We may
therefore apply \eqref{eq:young} in \eqref{eq:balanceNQ} at $t=T$ and move the resulting
dissipation term to the left. Dropping the nonnegative term
$\tfrac12\|u_{N,Q}(T)\|_{L^2}^2$ and using $\|u_{N,Q}(0)\|_{L^2}\le\|u_0\|_{L^2}$,
we obtain
\begin{equation}
\begin{split}
\frac{\nu}{2}\int_0^T\|\partial_1u_{N,Q}\|_{L^2}^2\,dt
&\;\le\;\frac12\|u_0\|_{L^2}^2
+\frac{C}{\nu}\int_0^T\|u\|_{L^4}^4\,dt\\
&\quad+\int_0^T|\Pi^v_{N,Q}(t)|\,dt .
\end{split}
\label{eq:absorbed}
\end{equation}

\emph{Step 2: the vertical flux.}
By Lemma~\ref{lem:splitcomm},
\[
\Pi^v_{N,Q} = \int_\Omega (S_N^hR_Q^v(u))^{\cdot 2}\cdot\partial_2u_{N,Q}\,dx
+ \int_\Omega (R_N^h(u_Q))^{\cdot 2}\cdot\partial_2u_{N,Q}\,dx
=: I_{N,Q} + J_{N,Q}.
\]
For $I_{N,Q}$, H\"older's inequality, the uniform $L^{3/2}$- and
$L^3$-boundedness of $S_N^h$ (Lemma~\ref{lem:kernelbasic}), \eqref{eq:commproduct} and
\eqref{eq:dQbound} yield, uniformly in $N$ and $Q$,
\begin{equation}
|I_{N,Q}(t)|
\le \|S_N^hR_Q^v(u(t))\|_{L^{3/2}}\,\|S_N^h\partial_2u_Q(t)\|_{L^3}
\lesssim d_Q(u(t))^2\,\|u(t)\|_{B^{1/3,v}_{3,\infty}}
\lesssim \|u(t)\|_{B^{1/3,v}_{3,\infty}}^3,
\label{eq:Ibound}
\end{equation}
and the right-hand side lies in $L^1(0,T)$ by \eqref{eq:besovweak}.

For $J_{N,Q}$, fix $Q$. For a.e.\ $t$, we have $u_Q(t)\in L^4(\Omega)$, so
Lemma~\ref{lem:L2comm} with $p=4$ gives both
\[
\|R_N^h(u_Q(t))\|_{L^2}\lesssim\|u_Q(t)\|_{L^4}^2
\lesssim\|u(t)\|_{L^4}^2
\qquad\text{uniformly in }N,
\]
and $R_N^h(u_Q(t))\to0$ in $L^2(\Omega)$ as $N\to\infty$.
The second inequality uses the uniform $L^4$-boundedness of $S_Q^v$
(Lemma~\ref{lem:kernelbasic}). Moreover, since $\partial_2u_{N,Q}=S_N^h\partial_2u_Q$, the
$L^2$-contractivity of $S_N^h$ and Bernstein's inequality \cite[Lemma~2.1]{BCD} give
\[
\|\partial_2u_{N,Q}(t)\|_{L^2}\le\|\partial_2u_Q(t)\|_{L^2}\lesssim2^Q\|u(t)\|_{L^2}
\]
uniformly in $N$. Hence $J_{N,Q}(t)\to0$ as $N\to\infty$ for a.e.\ $t$, with the bound
\begin{equation*}
\begin{aligned}
|J_{N,Q}(t)|&\le \|R_N^h(u_Q(t))\|_{L^2}\,\|\partial_2u_{N,Q}(t)\|_{L^2}\\
&\lesssim 2^Q\,\|u\|_{L^\infty_tL^2_x}\,\|u(t)\|_{L^4}^2 \in L^1(0,T).
\end{aligned}
\end{equation*}
By dominated convergence,
\begin{equation}
\int_0^T |J_{N,Q}(t)|\,dt \;\longrightarrow\; 0
\qquad (N\to\infty,\ Q \text{ fixed}).
\label{eq:Jzero}
\end{equation}

\emph{Step 3: passage to the limit.}
Combining \eqref{eq:absorbed}, \eqref{eq:Ibound} and \eqref{eq:Jzero},
\begin{equation*}
\begin{split}
\limsup_{N\to\infty}\ \int_0^T\|\partial_1u_{N,Q}\|_{L^2}^2\,dt
&\;\le\;\frac{2}{\nu}\Bigl(\frac12\|u_0\|_{L^2}^2
+\frac{C}{\nu}\,\|u\|_{L^4(0,T;L^4)}^4\\
&\qquad\qquad+C\int_0^T\|u(t)\|_{B^{1/3,v}_{3,\infty}}^3\,dt\Bigr) \;=:\; K,
\end{split}
\end{equation*}
with $K$ independent of $Q$. By \eqref{eq:approxid} and dominated convergence,
$u_{N,Q}\to u_Q$ as $N\to\infty$ for fixed $Q$, and $u_Q\to u$ as $Q\to\infty$,
both in $L^2((0,T)\times\Omega)$. Their derivatives therefore converge in distributions.
Applying weak compactness and lower semicontinuity in $L^2$ first as $N\to\infty$
and then as $Q\to\infty$, we obtain $\partial_1u\in L^2((0,T)\times\Omega)$ and
\[
\int_0^T\|\partial_1u\|_{L^2}^2\,dt
\le\liminf_{Q\to\infty}\int_0^T\|\partial_1u_Q\|_{L^2}^2\,dt\le K,
\]
which is \eqref{eq:apriori}.
\end{proof}

\subsection{Isotropic regularity and the energy equality}\label{subsec:isotropic-energy}

This subsection proves parts (ii) and (iii) of Theorem~\ref{thm:main}, both of which use
the $c_0$ refinement \eqref{eq:besovhyp}. By Proposition~\ref{prop:apriori} we may use
$\partial_1u\in L^2(0,T;L^2(\Omega))$ throughout.

\begin{proposition}\label{prop:isotropic}
Under the hypotheses of Theorem~\ref{thm:main}, including the additional assumption
\eqref{eq:besovhyp},
\[
u\in L^3\bigl(0,T;B^{1/3}_{3,c_0}(\Omega)\bigr).
\]
\end{proposition}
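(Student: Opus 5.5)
The plan is to split each isotropic block according to whether the frequency is carried mainly by the vertical or by the horizontal variable. On the vertically dominated piece we use the hypothesis \eqref{eq:besovhyp}; on the horizontally dominated piece we use the horizontal dissipation supplied by Proposition~\ref{prop:apriori}, combined with \eqref{eq:hypL4}. For $q\ge1$ and a.e.\ $t$ we write
\[
\Delta_q u=\Delta_q\bigl(u-S^v_{q-2}u\bigr)+\Delta_qS^v_{q-2}u=:A_q+B_q .
\]
The operator $\Delta_q$ has an $L^1$ kernel with norm independent of $q$, since it is a dilate of a fixed Schwartz kernel (periodized on $\Torus$). Hence $\|A_q\|_{L^3}\lesssim\|u-S^v_{q-2}u\|_{L^3}$. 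By \eqref{eq:tailL3} (that is, the bound $\tau_Q\lesssim\sup_{j>Q}\epsilon_j$), this gives
\[
2^{q/3}\|A_q\|_{L^3}\lesssim\sup_{j>q-2}\epsilon_j(u(t)).
\]
This quantity is bounded by $\|u(t)\|_{B^{1/3,v}_{3,\infty}}\in L^3(0,T)$ and tends to $0$ as $q\to\infty$ for a.e.\ $t$, by \eqref{eq:besovhyp}.

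For $B_q$ we first locate its Fourier support. On the support of its symbol we have $|\xi|\ge\tfrac34 2^q$ and $|\xi_2|\le\tfrac43 2^{q-1}=\tfrac23 2^q$. Since $(3/4)^2>(2/3)^2$, this forces $|\xi_1|\ge c\,2^q$ with $c=\sqrt{9/16-4/9}>0$. We then write $B_q=2^{-q}\,m_q(D)\,\partial_1u$, where
\[
m_q(\xi)=\frac{2^q\,\varphi(2^{-q}|\xi|)\,\chi(2^{-q+1}\xi_2)}{i\xi_1}.
\]
After dilation by $2^q$, $m_q$ is a fixed smooth compactly supported symbol, because $\xi_1$ stays away from $0$ on its support. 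Therefore $m_q(D)$ has an $L^1$ kernel with norm uniform in $q$, and the same holds for the operator $\Delta_qS^v_{q-2}$ itself. Consequently
\[
\|B_q\|_{L^2}\lesssim 2^{-q}\|\widetilde P_q\partial_1u\|_{L^2},\qquad \|B_q\|_{L^4}\lesssim\|u\|_{L^4},
\]
where $\widetilde P_q$ is a Fourier projection onto $\{|\xi_1|\ge c2^q\}$ times a bounded multiplier. By interpolation, $\|B_q\|_{L^3}\le\|B_q\|_{L^2}^{1/3}\|B_q\|_{L^4}^{2/3}$, and so
\[
\bigl(2^{q/3}\|B_q\|_{L^3}\bigr)^3\lesssim \|\partial_1u(t)\|_{L^2}\,\|u(t)\|_{L^4}^2 .
\]
The right-hand side lies in $L^1(0,T)$ by Cauchy--Schwarz, using $\partial_1u\in L^2_tL^2_x$ from Proposition~\ref{prop:apriori} and $\|u\|_{L^4}^2\in L^2(0,T)$ from \eqref{eq:hypL4}. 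For the $c_0$ property we use the sharper factor $\|\widetilde P_q\partial_1u(t)\|_{L^2}$. By Plancherel this is dominated by the $L^2$ norm of $\widehat{\partial_1u}(t)$ restricted to $\{|\xi_1|\ge c2^q\}$, which tends to $0$ as $q\to\infty$ for a.e.\ $t$, since $\partial_1u(t)\in L^2$ for a.e.\ $t$.

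It remains to treat the finitely many blocks $q\le0$, which are bounded by $\|u(t)\|_{L^3}\lesssim\|u(t)\|_{B^{1/3,v}_{3,\infty}}$ via Lemma~\ref{lem:embedL3}. Combining the three pieces, we obtain
\[
\sup_q 2^{q/3}\|\Delta_qu(t)\|_{L^3}\lesssim \|u(t)\|_{B^{1/3,v}_{3,\infty}}+\|\partial_1u(t)\|_{L^2}^{1/3}\|u(t)\|_{L^4}^{2/3}\in L^3(0,T),
\]
and $2^{q/3}\|\Delta_qu(t)\|_{L^3}\to0$ for a.e.\ $t$. Measurability in $t$ follows because each block norm is measurable. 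This gives $u\in L^3(0,T;B^{1/3}_{3,c_0})$. The main obstacle is the horizontal piece $B_q$. There we must verify carefully that the mixed multiplier $\Delta_qS^v_{q-2}$, which combines a radial cutoff with a one-dimensional cutoff, and the operator $\partial_1^{-1}$ restricted to it, have uniformly $L^1$ kernels on both $\R^2$ and $\Torus$. I would do this by dilation to a fixed Schwartz symbol, followed by Poisson summation in the periodic case.
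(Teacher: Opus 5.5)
Your proof is correct, and it reaches the same final bound as the paper through a different decomposition of the isotropic block.

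\textbf{The paper's route.} The paper writes
\[
\mathrm{Id}=\sum_{m\ge-1}\bigl(\Delta_m^hS_m^v+S_{m-1}^h\Delta_m^v\bigr)
\]
and observes that only the terms with $|m-q|\le5$ survive under $\Delta_q$. This bounds $2^{q/3}\|\Delta_qu\|_{L^3}$ by nearby horizontal and vertical anisotropic blocks. The horizontal blocks are then handled by the same $L^2$--$L^4$ interpolation you use, with $\|\Delta_m^h\partial_1u\|_{L^2}$ in place of your $\|\widetilde P_q\partial_1u\|_{L^2}$. This buys a structural statement: the isotropic critical norm, and its $c_0$ property, are controlled by the horizontal and vertical ones, the horizontal one being supplied by \eqref{eq:horizprofile}. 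The cost is the telescoping identity, its convergence in $L^2$, and the count of overlapping indices.

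\textbf{Your route.} You make a single split per block, $\Delta_q=\Delta_q(\mathrm{Id}-S^v_{q-2})+\Delta_qS^v_{q-2}$. The first piece you control directly by the already proved tail bound \eqref{eq:tailL3}. For the second you use the geometric fact that $|\xi_1|\ge c\,2^q$ on its Fourier support, which lets you extract $2^{-q}\partial_1u$. This avoids the identity decomposition, the index bookkeeping, and the intermediate horizontal Besov quantity.

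\textbf{A simplification.} The step you flag as the main obstacle is not actually needed. The $L^2$ bound on $B_q$ follows from Plancherel alone, since $|m_q|\le c^{-1}$ on its support. The $L^4$ bound follows from the uniform $L^4$-boundedness of $\Delta_q$ and $S^v_{q-2}$ taken separately. So no uniform $L^1$ kernel estimate for the mixed multiplier $m_q(D)$ is required, on either domain.
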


\begin{proof}
For $q\ge0$, interpolation between $L^2$ and $L^4$, followed by Plancherel's theorem using
$|\xi_1|\sim2^q$ on the support of $\varphi(2^{-q}\xi_1)$, implies
\begin{equation}
2^{q/3}\|\Delta_q^hu\|_{L^3}
\le\bigl(2^{q}\|\Delta_q^hu\|_{L^2}\bigr)^{1/3}\|\Delta_q^hu\|_{L^4}^{2/3}
\lesssim \|\Delta_q^h\partial_1u\|_{L^2}^{1/3}\,\|u\|_{L^4}^{2/3}.
\label{eq:horizprofile}
\end{equation}
Moreover, by Proposition~\ref{prop:apriori}, \eqref{eq:hypL4}, and Cauchy--Schwarz in time,
\begin{equation}
\begin{split}
\int_0^T\left(\sup_{q\ge0}2^{q/3}\|\Delta_q^hu(t)\|_{L^3}\right)^3dt
&\lesssim \int_0^T\|\partial_1u(t)\|_{L^2}\,\|u(t)\|_{L^4}^2\,dt\\
&\le \|\partial_1u\|_{L^2(0,T;L^2)}
\|u\|_{L^4(0,T;L^4)}^2<\infty.
\end{split}
\label{eq:horizprofile-time}
\end{equation}
Since $\varphi(2^{-q}\xi_1)$ vanishes for $|\xi_1|<\tfrac34 2^q$, Plancherel's theorem and
dominated convergence show that $\|\Delta_q^h\partial_1u(t)\|_{L^2}\to0$ as $q\to\infty$
for a.e.\ $t$, so the right-hand side of \eqref{eq:horizprofile} tends to zero.
Set $S_{-2}^h=S_{-2}^v=0$. Summing the differences
$S_m^hS_m^v-S_{m-1}^hS_{m-1}^v$ yields
\begin{equation}
\mathrm{Id}\;=\;\sum_{m\ge-1}\bigl(\Delta_m^hS_m^v+S_{m-1}^h\Delta_m^v\bigr).
\label{eq:idsplit}
\end{equation}
The series converges strongly on $L^2(\Omega)$ by \eqref{eq:approxid}.
By Fourier support localization, we have
\begin{equation*}
\Delta_qu=\Delta_q\!\!\sum_{\substack{m\ge-1\\|m-q|\le5}}
\bigl(\Delta_m^hS_m^vu+S_{m-1}^h\Delta_m^vu\bigr),
\qquad q\ge0,
\end{equation*}
and therefore, by the uniform $L^3$-boundedness of the blocks $\Delta_q$,
\begin{equation}
2^{q/3}\|\Delta_qu\|_{L^3}
\lesssim\!\!\sum_{\substack{m\ge-1\\|m-q|\le5}}2^{m/3}
\bigl(\|\Delta_m^hu\|_{L^3}+\|\Delta_m^vu\|_{L^3}\bigr).
\label{eq:isoprofile}
\end{equation}
By \eqref{eq:embedL3}, the $L^3$-norms of $\Delta_{-1}u$, $\Delta_{-1}^hu$ and
$\Delta_{-1}^vu$ are bounded by $C\|u\|_{B^{1/3,v}_{3,\infty}}$.
The sum in \eqref{eq:isoprofile} has at most $11$ terms. Hence
\[
\sup_{q\ge-1}2^{q/3}\|\Delta_qu(t)\|_{L^3}
\;\lesssim\;\sup_{m\ge0}2^{m/3}\|\Delta_m^hu(t)\|_{L^3}
\;+\;\|u(t)\|_{B^{1/3,v}_{3,\infty}}.
\]
Both terms on the right lie in $L^3(0,T)$ by \eqref{eq:horizprofile-time} and
\eqref{eq:besovweak}.
For a.e.\ $t$, we have $2^{m/3}\|\Delta_m^hu(t)\|_{L^3}\to0$ as $m\to\infty$ by
\eqref{eq:horizprofile}. Similarly, $2^{m/3}\|\Delta_m^vu(t)\|_{L^3}\to0$ by
\eqref{eq:besovhyp}.
Since $|m-q|\le5$, \eqref{eq:isoprofile} implies
$2^{q/3}\|\Delta_qu(t)\|_{L^3}\to0$ as $q\to\infty$. Hence
$u\in L^3(0,T;B^{1/3}_{3,c_0}(\Omega))$.
\end{proof}

We turn to part (iii). By Proposition~\ref{prop:apriori} and the commutation of $S_Q^v$
with $\partial_1$,
\begin{equation}
\partial_1u_Q=S_Q^v\partial_1u\in L^2((0,T)\times\Omega),
\qquad
\partial_1u_{Q}\longrightarrow\partial_1u \quad (Q\to\infty)
\quad\text{in } L^2((0,T)\times\Omega),
\label{eq:d1uQconv}
\end{equation}
where convergence follows from \eqref{eq:approxid} applied to $\partial_1u$ and
dominated convergence. Set
\begin{equation}
\Pi_Q := \int_{\Omega} R_Q^v(u):\nabla u_Q\,dx .
\label{eq:PiQ}
\end{equation}

\begin{lemma}\label{lem:balanceQ}
Let $u$ be a weak solution in the sense of Definition~\ref{def:weak} satisfying
\eqref{eq:hypL4}, and assume $\partial_1u\in L^2(0,T;L^2(\Omega))$. Then, for each fixed
$Q\ge-1$, $\Pi_Q\in L^1(0,T)$ and the vertically truncated energy balance
\begin{equation}
\frac12\|u_Q(t)\|_{L^2}^2 + \nu\int_0^t \|\partial_1 u_Q(s)\|_{L^2}^2\,ds
= \frac12\|u_Q(0)\|_{L^2}^2 + \int_0^t \Pi_Q(s)\,ds
\label{eq:balanceQ}
\end{equation}
holds for every $t\in[0,T]$.
\end{lemma}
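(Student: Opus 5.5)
The plan is to obtain \eqref{eq:balanceQ} from the doubly truncated balance \eqref{eq:balanceNQ} by letting $N\to\infty$ with $Q$ and $t$ fixed. Each term converges for a simple reason, now that $\partial_1u\in L^2_tL^2_x$ is available. First, $\Pi_Q\in L^1(0,T)$. We have $R_Q^v(u)\in L^2$ with $\|R_Q^v(u(t))\|_{L^2}\lesssim\|u(t)\|_{L^4}^2$ by Lemma~\ref{lem:L2comm} with $p=4$. Split $\nabla u_Q$ into its columns. The $\partial_1u_Q=S_Q^v\partial_1u$ part is bounded in $L^2$ by $\|\partial_1u(t)\|_{L^2}$. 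The $\partial_2u_Q$ part is bounded in $L^2$ by $2^Q\|u\|_{L^\infty_tL^2_x}$ (Bernstein). Hence
\[
|\Pi_Q(t)|\lesssim\|u(t)\|_{L^4}^2\bigl(\|\partial_1u(t)\|_{L^2}+2^Q\|u\|_{L^\infty_tL^2_x}\bigr),
\]
and the right-hand side is in $L^1(0,T)$ by Cauchy--Schwarz in time and \eqref{eq:hypL4}.

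Next, the terms of \eqref{eq:balanceNQ} other than the flux. For each fixed $t$, $u(t)\in L^2$ by Remark~\ref{rem:weakcont}, so $u_{N,Q}(t)=S_N^hu_Q(t)\to u_Q(t)$ in $L^2$ by \eqref{eq:approxid}. This handles $\|u_{N,Q}(t)\|_{L^2}^2$ and $\|u_{N,Q}(0)\|_{L^2}^2$ for every $t$, not just almost every $t$. For the dissipation, $\partial_1u_{N,Q}=S_N^h\partial_1u_Q\to\partial_1u_Q$ in $L^2$ for a.e.\ $s$. These functions are dominated by $\|\partial_1u(s)\|_{L^2}^2\in L^1$ via $L^2$-contractivity, so dominated convergence gives convergence of $\nu\int_0^t\|\partial_1u_{N,Q}\|_{L^2}^2$.

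The main step is the flux. We write $R_{N,Q}(u)=S_N^hR_Q^v(u)+R_N^h(u_Q)$ by Lemma~\ref{lem:splitcomm} and pair this with $\nabla u_{N,Q}=S_N^h\nabla u_Q$. For a.e.\ $s$:
\begin{itemize}
\item $S_N^hR_Q^v(u)\to R_Q^v(u)$ in $L^2$ by \eqref{eq:approxid};
\item $R_N^h(u_Q)\to0$ in $L^2$ by \eqref{eq:L2commlimit} with $p=4$;
\item $S_N^h\nabla u_Q\to\nabla u_Q$ in $L^2$, since $\nabla u_Q(s)\in L^2$ (its $\partial_1$ column by the assumption on $\partial_1u$, its $\partial_2$ column by Bernstein).
\end{itemize}
Therefore $\Pi_{N,Q}(s)\to\Pi_Q(s)$ for a.e.\ $s$. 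Uniformly in $N$, Lemma~\ref{lem:splitcomm} and the contractivity of $S_N^h$ give the bound $|\Pi_{N,Q}(s)|\lesssim\|u(s)\|_{L^4}^2(\|\partial_1u(s)\|_{L^2}+2^Q\|u\|_{L^\infty_tL^2_x})$, the same integrable majorant as above. Dominated convergence then gives $\int_0^t\Pi_{N,Q}\to\int_0^t\Pi_Q$ for every $t$.

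Passing to the limit in \eqref{eq:balanceNQ} yields \eqref{eq:balanceQ} for every $t\in[0,T]$. The only delicate point, which is where the assumption $\partial_1u\in L^2_tL^2_x$ is essential, is finding an $N$-uniform integrable majorant for the horizontal part $\int R_{N,Q}^{\cdot1}\cdot\partial_1u_{N,Q}$. In Proposition~\ref{prop:apriori} this term could only be absorbed, not passed to the limit. Here it is controlled by $\|u\|_{L^4}^2\|\partial_1u\|_{L^2}\in L^1(0,T)$. The remaining steps are routine.
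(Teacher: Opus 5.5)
Your proposal is correct and follows essentially the paper's argument: you let $N\to\infty$ in the doubly truncated balance \eqref{eq:balanceNQ}, split the flux via Lemma~\ref{lem:splitcomm}, and use the $N$-uniform majorant $\|u\|_{L^4}^2\|\nabla u_Q\|_{L^2}\in L^1(0,T)$. The only cosmetic difference is that the paper handles the $S_N^hR_Q^v(u)$ part by strong convergence in $L^2((0,T)\times\Omega)$ and Cauchy--Schwarz, whereas you apply pointwise-in-time convergence and dominated convergence to the whole flux.
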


\begin{proof}
Fix $Q\ge-1$. By \eqref{eq:d1uQconv}, Bernstein's inequality and
Lemma~\ref{lem:L2comm} with $p=4$, we have
\begin{equation}
\nabla u_Q\in L^2((0,T)\times\Omega),
\qquad
\|R_Q^v(u(t))\|_{L^2}\lesssim\|u(t)\|_{L^4}^2\in L^2(0,T).
\label{eq:balanceQmemb}
\end{equation}
In particular $\Pi_Q\in L^1(0,T)$ by Cauchy--Schwarz.

For every $t\in[0,T]$, \eqref{eq:approxid} gives
$u_{N,Q}(t)=S_N^hu_Q(t)\to u_Q(t)$ in $L^2(\Omega)$ as $N\to\infty$.
Thus the kinetic energy converges, including at $t=0$.
Also, \eqref{eq:approxid} and dominated convergence give
$\partial_1u_{N,Q}=S_N^h\partial_1u_Q\to\partial_1u_Q$ in
$L^2((0,T)\times\Omega)$ as $N\to\infty$.
Hence, for every $t\in[0,T]$,
\[
\nu\int_0^t\|\partial_1u_{N,Q}(s)\|_{L^2}^2\,ds
\longrightarrow
\nu\int_0^t\|\partial_1u_Q(s)\|_{L^2}^2\,ds
\qquad(N\to\infty).
\]

For the flux, Lemma~\ref{lem:splitcomm} splits
\[
\Pi_{N,Q}=\int_\Omega S_N^hR_Q^v(u):\nabla u_{N,Q}\,dx
+\int_\Omega R_N^h(u_Q):\nabla u_{N,Q}\,dx=:\Pi'_{N,Q}+\Pi''_{N,Q}.
\]
By \eqref{eq:balanceQmemb}, \eqref{eq:approxid} and dominated convergence, $S_N^hR_Q^v(u)\to R_Q^v(u)$
and $\nabla u_{N,Q}=S_N^h\nabla u_Q\to\nabla u_Q$ in $L^2((0,T)\times\Omega)$, so that
Cauchy--Schwarz in $(t,x)$ yields
\begin{equation}
\int_0^T|\Pi'_{N,Q}-\Pi_Q|\,dt\longrightarrow0\qquad(N\to\infty).
\label{eq:Piprime}
\end{equation}
For $\Pi''_{N,Q}$, Lemma~\ref{lem:L2comm} with $p=4$ gives
$R_N^h(u_Q(t))\to0$ in $L^2(\Omega)$ as $N\to\infty$ for a.e.\ $t$.
The $L^2$-contractivity of $S_N^h$ gives
$\|\nabla u_{N,Q}(t)\|_{L^2}\le\|\nabla u_Q(t)\|_{L^2}$.
Hence $\Pi''_{N,Q}(t)\to0$ for a.e.\ $t$.
The commutator bound in the same lemma also yields
\[
|\Pi''_{N,Q}(t)|\lesssim\|u(t)\|_{L^4}^2\,\|\nabla u_Q(t)\|_{L^2}.
\]
This bound is uniform in $N$. Its right-hand side belongs to $L^1(0,T)$ by
\eqref{eq:balanceQmemb}. Dominated convergence therefore gives
\begin{equation}
\int_0^T|\Pi''_{N,Q}|\,dt\longrightarrow0\qquad(N\to\infty).
\label{eq:Pidoubleprime}
\end{equation}
By \eqref{eq:Piprime} and \eqref{eq:Pidoubleprime},
$\int_0^t\Pi_{N,Q}\,ds\to\int_0^t\Pi_Q\,ds$ uniformly in $t\in[0,T]$ as $N\to\infty$.
The identity \eqref{eq:balanceNQ} holds by Lemma~\ref{lem:balance}.
Letting $N\to\infty$ in this identity proves \eqref{eq:balanceQ} for every $t\in[0,T]$.
\end{proof}

\begin{proof}[Proof of Theorem~\ref{thm:main}]
Part (i) is Proposition~\ref{prop:apriori}. Under the additional assumption
\eqref{eq:besovhyp}, part (ii) is Proposition~\ref{prop:isotropic}. For part (iii), split
$\Pi_Q = \Pi_Q^h + \Pi_Q^v$ with
\[
\Pi_Q^h = \int_{\Omega} (R_Q^v(u))^{\cdot 1}\cdot\partial_1 u_Q\,dx,
\qquad
\Pi_Q^v = \int_{\Omega} (R_Q^v(u))^{\cdot 2}\cdot\partial_2 u_Q\,dx.
\]

\emph{Step 1: the vertical flux.}
By H\"older's inequality and \eqref{eq:commproduct},
\begin{equation*}
|\Pi_Q^v(t)| \le \|R_Q^v(u(t))\|_{L^{3/2}} \|\partial_2 u_Q(t)\|_{L^3}
\lesssim d_Q(u(t))^2\,\|u(t)\|_{B^{1/3,v}_{3,\infty}}.
\end{equation*}
For a.e.\ $t$, $u(t)\in B^{1/3,v}_{3,c_0}(\Omega)$ by \eqref{eq:besovhyp}, so
$d_Q(u(t))\to0$ by \eqref{eq:dQiff}, while
$d_Q(u(t))^2\|u(t)\|_{B^{1/3,v}_{3,\infty}}\lesssim\|u(t)\|_{B^{1/3,v}_{3,\infty}}^3\in L^1(0,T)$
by \eqref{eq:dQbound} and \eqref{eq:besovweak}. Dominated convergence implies
\begin{equation}
\int_0^T |\Pi_Q^v(t)|\,dt \to 0 \qquad (Q\to\infty).
\label{eq:vfluxzero}
\end{equation}

\emph{Step 2: the horizontal flux.}
By Cauchy--Schwarz,
Lemma~\ref{lem:L2comm}, and the uniform $L^2$-boundedness of $S_Q^v$ applied to
$\partial_1u(t)$,
\begin{equation}
|\Pi_Q^h(t)| \le \|R_Q^v(u(t))\|_{L^2}\,\|\partial_1u_Q(t)\|_{L^2}
\lesssim \|u(t)\|_{L^4}^2 \|\partial_1 u(t)\|_{L^2} =: G(t),
\label{eq:envelope}
\end{equation}
uniformly in $Q$. By Cauchy--Schwarz in $t$,
\[
\int_0^T G(t)\,dt \le \|u\|_{L^4_tL^4_x}^2 \, \|\partial_1 u\|_{L^2_tL^2_x} < \infty
\]
by \eqref{eq:hypL4} and Proposition~\ref{prop:apriori}, so $G\in L^1(0,T)$. Moreover, for
a.e.\ $t$,
Lemma~\ref{lem:L2comm} applied to $S_Q^v$ shows that $R_Q^v(u(t))\to0$ strongly in
$L^2(\Omega)$, so $\Pi_Q^h(t)\to0$ by the first inequality in \eqref{eq:envelope}. Dominated
convergence then yields
\begin{equation}
\int_0^T |\Pi_Q^h(t)|\,dt \to 0 \qquad (Q\to\infty).
\label{eq:hfluxzero}
\end{equation}

\emph{Step 3: conclusion.}
Combining \eqref{eq:vfluxzero} and \eqref{eq:hfluxzero},
\begin{equation}
\sup_{0\le t\le T} \Bigl|\int_0^t \Pi_Q(s)\,ds\Bigr| \le \int_0^T |\Pi_Q(s)|\,ds \to 0.
\label{eq:supzero}
\end{equation}
Since $u(t)\in L^2(\Omega)$ for every $t\in[0,T]$ and $u(0)=u_0$,
\eqref{eq:approxid} yields $u_Q(t)\to u(t)$ and
$u_Q(0)\to u_0$ strongly in $L^2(\Omega)$ for every $t\in[0,T]$.
Passing to the limit $Q\to\infty$ in \eqref{eq:balanceQ}, using \eqref{eq:d1uQconv} and
\eqref{eq:supzero}, we obtain \eqref{eq:energyeq} for every $t\in[0,T]$. Finally, the
dissipation integral in \eqref{eq:energyeq} is continuous in $t$ because
$\partial_1u\in L^2(0,T;L^2(\Omega))$, so \eqref{eq:energyeq} makes
$t\mapsto\|u(t)\|_{L^2}$ continuous on $[0,T]$. Together with weak continuity,
it follows that $u\in C([0,T];L^2(\Omega))$.
\end{proof}

\backmatter

\section*{Declarations}

\begin{itemize}
\item \textbf{Funding.}\\
The author is supported by the Basic Research Program of Jiangsu (No. BK20251436) and the
Fundamental Research Funds for the Central Universities in China (No. 30924010943).
\item \textbf{Competing interests.}\\
The author declares no competing interests.
\item \textbf{Data availability.}\\
Data sharing not applicable to this article as no datasets were generated or analysed
during the current study.
\item \textbf{Use of AI tools.}\\
The author used ChatGPT (OpenAI) and Claude (Anthropic) for language editing, literature search, checking of references, and suggestions on the organization and presentation of the manuscript. The author independently verified all mathematical content and references and takes full responsibility for the paper.
\end{itemize}

\begin{appendices}

\section{The doubly truncated energy balance}\label{app:A}

Write
\[
\mathscr D(\Omega):=
\begin{cases}
C_c^\infty(\Omega), & \Omega=\R^2,\\
C^\infty(\Omega), & \Omega=\Torus,
\end{cases}
\]
and let $\mathscr D_\sigma(\Omega)$ and $H^1_\sigma(\Omega)$ be the divergence-free fields
in $\mathscr D(\Omega;\R^2)$ and in $H^1(\Omega;\R^2)$, and write
$\nabla^\perp:=(-\partial_2,\partial_1)$. On $\Omega=\R^2$ we use the cutoff
\begin{equation}
\Theta_R(x):=\theta(x_1/R)\,\theta(x_2/R),
\qquad \theta\in C_c^\infty(\R),
\quad \operatorname{supp}\theta\subset[-2,2],
\quad \theta\equiv1 \text{ on } [-1,1],
\label{eq:cutoff}
\end{equation}
with $\theta$ fixed once and for all. Universal constants may depend on $\theta$, as
stated in Section~\ref{sec:statement}. We also use the two-dimensional Ladyzhenskaya
inequality \cite{Ladyzhenskaya1969},
\begin{equation}
\|f\|_{L^4(\Omega)}\lesssim\|f\|_{L^2(\Omega)}^{1/2}\,\|f\|_{H^1(\Omega)}^{1/2},
\qquad\text{so in particular}\qquad H^1(\Omega)\hookrightarrow L^4(\Omega),
\label{eq:ladyzh}
\end{equation}
valid on both domains. We recall the following standard density result and include a
proof for completeness.

\begin{lemma}\label{lem:density}
$\mathscr D_\sigma(\Omega)$ is dense in $H^1_\sigma(\Omega)$ and in $L^2_\sigma(\Omega)$.
\end{lemma}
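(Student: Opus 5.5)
We prove the two density statements separately, treating the torus and the plane in parallel. The tool is the stream function representation through $\nabla^\perp$, or equivalently the Leray projector.

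\emph{Torus.} On $\Torus$ a divergence-free $f\in L^2$ has Fourier coefficients satisfying $n\cdot\widehat f(n)=0$. The Fourier truncations $\sum_{|n|\le M}\widehat f(n)e^{in\cdot x}$ are trigonometric polynomials. They are smooth and remain divergence-free, since the constraint holds coefficientwise. They converge to $f$ in $L^2$ by Plancherel, and in $H^1$ when $f\in H^1$, by dominated convergence of $\sum(1+|n|^2)|\widehat f(n)|^2$. This settles $\Omega=\Torus$ directly.

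\emph{Plane, step 1: smoothing.} On $\R^2$ we first mollify, or apply the Fourier cutoff $\chi(2^{-M}|D|)$. This commutes with divergence and gives smooth divergence-free fields in $H^\infty$ converging to $f$ in $L^2$, and in $H^1$ when $f\in H^1$. So it suffices to approximate a smooth divergence-free $g\in H^k$ for all $k$, with $\widehat g$ compactly supported.

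\emph{Plane, step 2: localization.} Next we write $g=\nabla^\perp\psi$ with $\widehat\psi(\xi)=i\,\xi^\perp\cdot\widehat g(\xi)/|\xi|^2$. Then $\psi$ is smooth and all derivatives of $\psi$ lie in $L^2$, but $\psi$ itself need not lie in $L^2$ because of the low-frequency singularity. We then set $g_R:=\nabla^\perp\bigl(\Theta_R(\psi-c_R)\bigr)$, which is in $\mathscr D_\sigma(\R^2)$, with $c_R$ the mean of $\psi$ over the annulus $A_R:=\{R\le|x|_\infty\le2R\}$. Then
\[
g_R-\Theta_Rg=(\psi-c_R)\nabla^\perp\Theta_R .
\]
Since $|\nabla\Theta_R|\lesssim R^{-1}$ is supported in $A_R$, Poincar\'e's inequality on the annulus, whose constant scales like $R$, gives
\[
\|(\psi-c_R)\nabla^\perp\Theta_R\|_{L^2}\lesssim\|\nabla\psi\|_{L^2(A_R)}=\|g\|_{L^2(A_R)}\to0 .
\]
Also $\Theta_Rg\to g$ in $L^2$. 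For the $H^1$ convergence we also differentiate once. The additional terms are $(\psi-c_R)\nabla^2\Theta_R$, controlled by $R^{-1}\|g\|_{L^2(A_R)}$ through the same Poincar\'e bound, and $\nabla\psi\otimes\nabla\Theta_R$, controlled by $R^{-1}\|g\|_{L^2}$. Both tend to zero.

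\emph{Main obstacle.} The delicate point is that $\psi$ is not in $L^2$ on $\R^2$. The proof handles this only through the Poincar\'e inequality on the annuli with scale-invariant constant, which is why the mean $c_R$ is subtracted rather than using $\psi$ directly.
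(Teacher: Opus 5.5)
\textbf{The gap.} One step fails as written: the Fourier definition of the stream function in Step 2. Your Step 1 removes only high frequencies, so near $\xi=0$ the transform $\widehat g$ is merely in $L^2$. Since $\xi\cdot\widehat g(\xi)=0$ forces $\widehat g(\xi)\parallel\xi^\perp$ for $\xi\neq0$, your formula gives $|\widehat\psi(\xi)|=|\widehat g(\xi)|/|\xi|$. Because $|\xi|^{-1}\notin L^2_{\mathrm{loc}}(\R^2)$, $\widehat\psi$ need not even be locally integrable.

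Concretely, take $\rho\in C_c^\infty(\R^2)$ radial, equal to $1$ near $0$ and supported in the unit ball. Then $\widehat g(\xi)=i\,\xi^\perp\rho(\xi)\,|\xi|^{-2}\log^{-1}(e/|\xi|)$ defines a real, divergence-free $g\in H^\infty$ with compact Fourier support, so it is a legitimate output of Step 1. Yet $|\widehat\psi(\xi)|=\rho(\xi)\,|\xi|^{-2}\log^{-1}(e/|\xi|)$ is not integrable near the origin. The low-frequency problem is therefore worse than $\psi\notin L^2$: the formula does not define $\psi$ as a tempered distribution at all, and your claim that $\psi$ is smooth is unsupported.

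\textbf{Two repairs.} First, you can build $\psi$ in physical space, which is what the paper does: it writes the curl-free field $(v_2,-v_1)$ as a distributional gradient on the plane. Since your $g$ is smooth, you can even take $\psi(x)=\int_0^{x_1}g_2(s,0)\,ds-\int_0^{x_2}g_1(x_1,y)\,dy$. This $\psi$ is smooth and satisfies $\nabla^\perp\psi=g$ because $\operatorname{div}g=0$. With it, your Step 2 goes through verbatim, and its estimates are exactly the paper's.

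Second, you can also remove low frequencies in Step 1, using the multiplier $\chi(2^{-M}|D|)-\chi(2^M|D|)$. This still tends to the identity on $L^2$ and $H^1$, since $\{0\}$ is a null set. Then $\widehat g$ vanishes near $0$, your formula yields $\psi\in H^\infty$, and $\nabla^\perp(\Theta_R\psi)\to g$ in $H^1$ with no mean subtraction or Poincar\'e inequality at all.

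\textbf{Comparison with the paper.} Once repaired, your route differs from the paper's only in order: you smooth first and localize second. As a result, $g_R$ already lies in $\mathscr D_\sigma(\R^2)$, so the paper's final mollification and diagonal argument disappear. You also handle $L^2_\sigma$ directly rather than reducing it to $H^1_\sigma$ through $S_N^hS_N^v$.
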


\begin{proof}
On $\Omega=\Torus$ this follows from Fourier truncation. On $\Omega=\R^2$, let $v\in H^1_\sigma(\R^2)$. The field $(v_2,-v_1)\in H^1(\R^2;\R^2)$ is
curl-free because $\nabla\cdot v=0$, hence a distributional gradient $\nabla\psi$ on the
simply connected plane, and $\nabla\psi\in H^1(\R^2;\R^2)$ shows that $\psi\in
H^2_{\mathrm{loc}}(\R^2)$, with $v=\nabla^\perp\psi$. Set
\[
\begin{gathered}
A_R:=(-2R,2R)^2\setminus[-R,R]^2,\qquad
c_R:=\frac{1}{|A_R|}\int_{A_R}\psi\,dx,\\
v_R:=\nabla^\perp\bigl(\Theta_R(\psi-c_R)\bigr),
\end{gathered}
\]
a compactly supported divergence-free field. The scaled Poincar\'e inequality on $A_R$
bounds $\|\psi-c_R\|_{L^2(A_R)}$ by $CR\|v\|_{L^2(A_R)}$, and together with
$\|\nabla^k\Theta_R\|_{L^\infty}\lesssim R^{-k}$ for $k=1,2$ this yields
\[
\|v_R-v\|_{H^1(\R^2)}
\lesssim\Bigl(\|v\|_{H^1(\R^2\setminus[-R,R]^2)}+R^{-1}\|v\|_{L^2(A_R)}\Bigr)
\longrightarrow0 .
\]
Mollifying $v_R$ and taking a diagonal sequence proves density in $H^1_\sigma(\Omega)$.
For $v\in L^2_\sigma(\Omega)$, the approximants $S_N^hS_N^vv$ belong to
$H^1_\sigma(\Omega)$ by Bernstein's inequality and converge to $v$ in $L^2$ by
\eqref{eq:approxid}. Density in $L^2_\sigma(\Omega)$ follows.
\end{proof}

\begin{lemma}\label{lem:balance}
Let $u$ be a weak solution in the sense of Definition~\ref{def:weak} satisfying
\eqref{eq:hypL4}, and let $N,Q\ge-1$. Then $\Pi_{N,Q}\in L^1(0,T)$ and
\eqref{eq:balanceNQ} holds for every $t\in[0,T]$.
\end{lemma}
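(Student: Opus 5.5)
The plan is to test the weak formulation \eqref{eq:weakform} with the doubly truncated field itself, suitably mollified in time and, on $\R^2$, cut off in space. Fix $N,Q$ and set $P:=S_N^hS_Q^v$, which is a symmetric, even Fourier multiplier with compactly supported symbol. Every spatial derivative of $Pu$ lies in $L^\infty_tL^2_x$ by Bernstein's inequality and \eqref{eq:weakcontbound}.

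\textbf{Step 1: the equation for $u_{N,Q}$.} Since $P$ commutes with derivatives and preserves divergence-free fields, testing \eqref{eq:weakform} with $\Phi=\eta(t)P\phi$, where $\phi\in\mathscr D_\sigma$, gives in $\mathscr D'(0,T)$
\[
\frac{d}{dt}(u_{N,Q},\phi)=\bigl(P(u\otimes u),\nabla\phi\bigr)-\nu(\partial_1u_{N,Q},\partial_1\phi).
\]
By Lemma~\ref{lem:density} the same identity extends to $\phi\in H^1_\sigma$. The right-hand side defines an element of $L^2(0,T;H^{-1})$, because $P(u\otimes u)\in L^2_tL^2_x$ by \eqref{eq:hypL4}. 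On $\R^2$, $\mathscr D_\sigma$ is not $P$-invariant, so the test function $P\phi$ is no longer compactly supported. I handle this by approximating $P\phi$ with the stream-function cutoff $\nabla^\perp(\Theta_R\,\cdot)$ from the proof of Lemma~\ref{lem:density}. The error vanishes as $R\to\infty$ since $P\phi\in H^1_\sigma$ and every term is bounded in $H^1$-dual norms.

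\textbf{Step 2: the energy identity for $u_{N,Q}$.} The field $u_{N,Q}$ lies in $L^\infty_tH^k_\sigma$ for all $k$, and $\partial_tu_{N,Q}$ lies in $L^2_tH^{-1}$ (indeed in $L^2_tL^2$ after another application of $P$). The Lions–Magenes lemma therefore gives $u_{N,Q}\in C([0,T];L^2)$ and
\[
\tfrac12\|u_{N,Q}(t)\|^2_{L^2}-\tfrac12\|u_{N,Q}(0)\|^2_{L^2}=\int_0^t\Bigl[\bigl(P(u\otimes u),\nabla u_{N,Q}\bigr)-\nu\|\partial_1u_{N,Q}\|_{L^2}^2\Bigr]ds.
\]
One can also argue concretely by mollifying in time and using $\Phi=\eta\,(P^2u)^\varepsilon$. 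The identity holds for every $t$, not merely a.e., because weak continuity (Remark~\ref{rem:weakcont}) combined with the smoothing of $P$ gives strong continuity of $u_{N,Q}$ in $L^2$. Moreover $u_{N,Q}(0)=Pu_0$ by \eqref{eq:weakcontbound}.

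\textbf{Step 3: subtracting the self-interaction.} The key identity is
\[
\int u_{N,Q}\otimes u_{N,Q}:\nabla u_{N,Q}\,dx=0,
\]
which holds because $\nabla\cdot u_{N,Q}=0$ and $u_{N,Q}\in H^1\cap W^{1,\infty}$ decays suitably. On $\R^2$ this requires integration by parts with $u_{N,Q}\in L^2\cap L^\infty$, $\nabla u_{N,Q}\in L^2$, justified with the cutoff $\Theta_R$. Subtracting this term converts the flux into $\Pi_{N,Q}$ and yields \eqref{eq:balanceNQ}. Integrability follows from $|\Pi_{N,Q}|\lesssim\|u\|_{L^4}^2\,2^{N+Q}\|u\|_{L^\infty_tL^2}$ via \eqref{eq:RNQL2}, so $\Pi_{N,Q}\in L^1(0,T)$.

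\textbf{Main obstacle.} I expect the main difficulty to be the rigorous time regularization in Step 2 at the endpoints $t=0$ and $t$. The difficulty is there because $u$ is only weakly continuous. Concretely, I would use a symmetric mollifier $\rho_\varepsilon$ in time and the test function $\Phi=\mathbf 1_{[0,t]}$-smoothed times $\rho_\varepsilon*(\rho_\varepsilon*P^2u)$. The time-derivative term then cancels by antisymmetry. The boundary terms converge to $\tfrac12\|Pu(t)\|^2$ and $\tfrac12\|Pu_0\|^2$ precisely because $Pu$ is strongly continuous.
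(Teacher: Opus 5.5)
Your proposal is correct and follows essentially the paper's route: test \eqref{eq:weakform} with $\eta\,S_N^hS_Q^v\phi$ (with the stream-function cutoff on $\R^2$), control the time derivative of $u_{N,Q}$ in $L^2$ in time, derive the energy identity, drop the self-interaction term, and identify values at every $t$ through weak continuity; the only real difference is that the paper writes the Leray-projected equation to get $u_{N,Q}\in H^1(0,T;L^2_\sigma)$ directly, where you invoke Lions--Magenes for the triple $H^1_\sigma\subset L^2_\sigma\subset(H^1_\sigma)'$. Two justifications need tightening: first, on $\R^2$ the cutoff approximants must converge to $S_N^hS_Q^v\phi$ in $H^2$, not merely in $H^1$, because the viscous term $\nu\int u\cdot\partial_1^2\Phi$ sees only $u\in L^\infty_tL^2_x$ (this convergence holds since $\phi=\nabla^\perp\psi$ with $\psi\in C_c^\infty$, so the stream function $S_N^hS_Q^v\psi$ is Schwartz); second, weak continuity plus the smoothing of $S_N^hS_Q^v$ does not by itself give strong continuity on $\R^2$, where this multiplier is not compact, so the identity at every $t$ should come, as in the paper, from the weak continuity of $t\mapsto u_{N,Q}(t)$ together with its almost-everywhere agreement with the continuous representative.
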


\begin{proof}
By Bernstein's inequality, $u\in L^\infty_tL^2_x$ and \eqref{eq:hypL4} give
\begin{equation}
u_{N,Q}\in L^\infty(0,T;H^2(\Omega)),
\qquad
S_N^hS_Q^v(u\otimes u)\in L^2(0,T;H^1(\Omega)).
\label{eq:wAreg}
\end{equation}
Let $\eta\in C_c^\infty(0,T)$ and $v\in\mathscr D_\sigma(\Omega)$. On $\Torus$,
$\eta\,S_N^hS_Q^vv$ is an admissible test function in \eqref{eq:weakform}. On $\R^2$,
$S_N^hS_Q^vv$ is a Schwartz function. Since $\operatorname{div}v=0$ and $v$ has
compact support, $\int_\R v_1(x_1,y)\,dy=0$ for every $x_1$, so
\[
\psi(x):=-\int_{-\infty}^{x_2}v_1(x_1,y)\,dy
\]
belongs to $C_c^\infty(\R^2)$ and satisfies $v=\nabla^\perp\psi$. Since
$S_N^hS_Q^v\psi$ is a Schwartz function, the fields
$\nabla^\perp(\Theta_RS_N^hS_Q^v\psi)\in\mathscr D_\sigma(\R^2)$ converge to
$S_N^hS_Q^vv=\nabla^\perp S_N^hS_Q^v\psi$ in $H^2(\R^2)$ as $R\to\infty$. As
$u\in L^\infty_tL^2_x$ and $u\otimes u\in L^2_tL^2_x$, \eqref{eq:weakform} holds for
$\Phi=\eta\,S_N^hS_Q^vv$ on both domains.

Let $\mathbb P$ be the Leray projection. Since $S_N^hS_Q^v$ is symmetric and commutes
with spatial derivatives, taking $\Phi=\eta\,S_N^hS_Q^vv$ in \eqref{eq:weakform},
integrating by parts in $x$ by \eqref{eq:wAreg}, and using $\mathbb Pv=v$ and the
density of $\mathscr D_\sigma(\Omega)$ in $L^2_\sigma(\Omega)$ from
Lemma~\ref{lem:density}, we obtain
\begin{equation}
\partial_tu_{N,Q}=-\mathbb P\operatorname{div}S_N^hS_Q^v(u\otimes u)+\nu\,\partial_1^2u_{N,Q}
\qquad\text{in }\mathcal D'(0,T;L^2_\sigma(\Omega)),
\label{eq:ptwNQ}
\end{equation}
with right-hand side in $L^2(0,T;L^2_\sigma(\Omega))$ by \eqref{eq:wAreg}. Hence
$u_{N,Q}\in H^1(0,T;L^2_\sigma(\Omega))$, so $t\mapsto\|u_{N,Q}(t)\|_{L^2}^2$ is
absolutely continuous with derivative $2(\partial_tu_{N,Q},u_{N,Q})_{L^2}$, and the
inner product of \eqref{eq:ptwNQ} with $u_{N,Q}$, using $\mathbb Pu_{N,Q}=u_{N,Q}$ and
\eqref{eq:wAreg}, yields
\begin{equation}
\frac12\frac{d}{dt}\|u_{N,Q}\|_{L^2}^2+\nu\|\partial_1u_{N,Q}\|_{L^2}^2
=\int_\Omega S_N^hS_Q^v(u\otimes u):\nabla u_{N,Q}\,dx
\qquad\text{for a.e.\ }t\in(0,T).
\label{eq:energyNQ}
\end{equation}

By \eqref{eq:PiNQ}, $S_N^hS_Q^v(u\otimes u)=u_{N,Q}\otimes u_{N,Q}+R_{N,Q}(u)$, and,
since $u_{N,Q}(t)\in H^1_\sigma(\Omega)$, the integral of
$(u_{N,Q}\otimes u_{N,Q}):\nabla u_{N,Q}$ over $\Omega$ vanishes, by integration by
parts on $\mathscr D_\sigma(\Omega)$ and then Lemma~\ref{lem:density} and
\eqref{eq:ladyzh}. So the right-hand side of \eqref{eq:energyNQ} equals $\Pi_{N,Q}$,
which lies in $L^1(0,T)$ by \eqref{eq:hypL4}, \eqref{eq:RNQL2} and \eqref{eq:wAreg}.
Finally, $t\mapsto S_N^hS_Q^vu(t)$ is weakly continuous on $[0,T]$ by
Remark~\ref{rem:weakcont}, since $S_N^hS_Q^v$ is bounded on $L^2$. It agrees almost
everywhere with the continuous representative of
$u_{N,Q}\in H^1(0,T;L^2_\sigma(\Omega))$, hence for every $t\in[0,T]$. Integrating
\eqref{eq:energyNQ} over $(0,t)$ proves \eqref{eq:balanceNQ}.
\end{proof}

\end{appendices}


\begin{thebibliography}{99}

\bibitem{BCD} Bahouri, H., Chemin, J.-Y., Danchin, R.: Fourier Analysis and Nonlinear
Partial Differential Equations. Grundlehren der mathematischen Wissenschaften, vol. 343.
Springer, Heidelberg (2011)
\url{https://doi.org/10.1007/978-3-642-16830-7}

\bibitem{BerselliChiodaroli2020} Berselli, L.C., Chiodaroli, E.: On the energy equality for
the 3D Navier--Stokes equations. Nonlinear Anal. 192, 111704 (2020)
\url{https://doi.org/10.1016/j.na.2019.111704}

\bibitem{BDSV2019} Buckmaster, T., De Lellis, C., Sz\'ekelyhidi, L., Jr., Vicol, V.:
Onsager's conjecture for admissible weak solutions. Commun. Pure Appl. Math. 72, 229--274
(2019)
\url{https://doi.org/10.1002/cpa.21781}

\bibitem{CDGG2000} Chemin, J.-Y., Desjardins, B., Gallagher, I., Grenier, E.: Fluids with
anisotropic viscosity. M2AN Math. Model. Numer. Anal. 34, 315--335 (2000)
\url{https://doi.org/10.1051/m2an:2000143}

\bibitem{CDGGbook} Chemin, J.-Y., Desjardins, B., Gallagher, I., Grenier, E.: Mathematical
Geophysics: An Introduction to Rotating Fluids and the Navier--Stokes Equations. Oxford
Lecture Series in Mathematics and Its Applications, vol. 32. Oxford University Press,
Oxford (2006)
\url{https://doi.org/10.1093/oso/9780198571339.001.0001}

\bibitem{CCFS2008} Cheskidov, A., Constantin, P., Friedlander, S., Shvydkoy, R.: Energy
conservation and Onsager's conjecture for the Euler equations. Nonlinearity 21, 1233--1252
(2008)
\url{https://doi.org/10.1088/0951-7715/21/6/005}

\bibitem{CheskidovLuo2020} Cheskidov, A., Luo, X.: Energy equality for the Navier--Stokes
equations in weak-in-time Onsager spaces. Nonlinearity 33, 1388--1403 (2020)
\url{https://doi.org/10.1088/1361-6544/ab60d3}

\bibitem{CheskidovLuo2023} Cheskidov, A., Luo, X.: $L^2$-critical nonuniqueness for the 2D
Navier--Stokes equations. Ann. PDE 9, Paper No. 13 (2023)
\url{https://doi.org/10.1007/s40818-023-00154-9}

\bibitem{CET1994} Constantin, P., E, W., Titi, E.S.: Onsager's conjecture on the energy
conservation for solutions of Euler's equation. Commun. Math. Phys. 165, 207--209 (1994)
\url{https://doi.org/10.1007/BF02099744}

\bibitem{DemmelWiedemann2026} Demmel, J., Wiedemann, E.: Energy rigidity and weak-strong
uniqueness for the 2D anisotropic Navier--Stokes equations. arXiv:2608.19931v1 (2026)
\url{https://doi.org/10.48550/arXiv.2608.19931}

\bibitem{DuchonRobert2000} Duchon, J., Robert, R.: Inertial energy dissipation for weak
solutions of incompressible Euler and Navier--Stokes equations. Nonlinearity 13, 249--255
(2000)
\url{https://doi.org/10.1088/0951-7715/13/1/312}

\bibitem{Eyink1994} Eyink, G.L.: Energy dissipation without viscosity in ideal
hydrodynamics I. Fourier analysis and local energy transfer. Phys. D 78, 222--240 (1994)
\url{https://doi.org/10.1016/0167-2789(94)90117-1}

\bibitem{FLRT2000} Furioli, G., Lemari\'e-Rieusset, P.-G., Terraneo, E.: Unicit\'e dans
$L^3(\mathbb R^3)$ et d'autres espaces fonctionnels limites pour Navier--Stokes. Rev. Mat.
Iberoam. 16, 605--667 (2000)
\url{https://doi.org/10.4171/RMI/286}

\bibitem{Galdi2019} Galdi, G.P.: On the energy equality for distributional solutions to
Navier--Stokes equations. Proc. Am. Math. Soc. 147, 785--792 (2019)
\url{https://doi.org/10.1090/proc/14256}

\bibitem{GiriRadu2024} Giri, V., Radu, R.-O.: The Onsager conjecture in 2D: a Newton--Nash
iteration. Invent. Math. 238, 691--768 (2024)
\url{https://doi.org/10.1007/s00222-024-01291-z}

\bibitem{Hopf1951} Hopf, E.: \"Uber die Anfangswertaufgabe f\"ur die hydrodynamischen
Grundgleichungen. Math. Nachr. 4, 213--231 (1951)
\url{https://doi.org/10.1002/mana.3210040121}

\bibitem{Isett2018} Isett, P.: A proof of Onsager's conjecture. Ann. Math. 188, 871--963
(2018)
\url{https://doi.org/10.4007/annals.2018.188.3.4}

\bibitem{Ladyzhenskaya1969} Ladyzhenskaya, O.A.: The Mathematical Theory of Viscous
Incompressible Flow, 2nd edn. Gordon and Breach, New York (1969)

\bibitem{Leray1934} Leray, J.: Sur le mouvement d'un liquide visqueux emplissant l'espace.
Acta Math. 63, 193--248 (1934)
\url{https://doi.org/10.1007/BF02547354}

\bibitem{LZZ} Liang, S., Zhang, P., Zhu, R.: Deterministic and stochastic 2D Navier--Stokes
equations with anisotropic viscosity. J. Differ. Equ. 275, 473--508 (2021)
\url{https://doi.org/10.1016/j.jde.2020.11.028}

\bibitem{Lions1960} Lions, J.-L.: Sur la r\'egularit\'e et l'unicit\'e des solutions
turbulentes des \'equations de Navier--Stokes. Rend. Semin. Mat. Univ. Padova 30, 16--23
(1960)
\url{https://www.numdam.org/item/RSMUP_1960__30__16_0/}

\bibitem{Onsager1949} Onsager, L.: Statistical hydrodynamics. Nuovo Cimento (9) 6, Suppl.
2, 279--287 (1949)
\url{https://doi.org/10.1007/BF02780991}

\bibitem{Paicu2005} Paicu, M.: \'Equation anisotrope de Navier--Stokes dans des espaces
critiques. Rev. Mat. Iberoam. 21, 179--235 (2005)
\url{https://doi.org/10.4171/RMI/420}

\bibitem{Shinbrot1974} Shinbrot, M.: The energy equation for the Navier--Stokes system.
SIAM J. Math. Anal. 5, 948--954 (1974)
\url{https://doi.org/10.1137/0505092}

\end{thebibliography}
\end{document}